\begin{document}

\theoremstyle{plain}

\newtheorem{thm}{Theorem}[section]
\newtheorem{lem}[thm]{Lemma}
\newtheorem{conj}[thm]{Conjecture}
\newtheorem{pro}[thm]{Proposition}
\newtheorem{cor}[thm]{Corollary}
\newtheorem{que}[thm]{Question}
\newtheorem{rem}[thm]{Remark}
\newtheorem{defi}[thm]{Definition}

\newtheorem*{thmA}{THEOREM A}
\newtheorem*{thmB}{THEOREM B}
\newtheorem*{CorB}{COROLLARY B}
\newtheorem*{thmC}{THEOREM C}

\newtheorem*{thmAcl}{Main Theorem$^{*}$}
\newtheorem*{thmBcl}{Theorem B$^{*}$}

\numberwithin{equation}{section}

\newcommand{\Maxn}{\operatorname{Max_{\textbf{N}}}}
\newcommand{\Syl}{\operatorname{Syl}}
\newcommand{\dl}{\operatorname{dl}}
\newcommand{\Con}{\operatorname{Con}}
\newcommand{\cl}{\operatorname{cl}}
\newcommand{\Stab}{\operatorname{Stab}}
\newcommand{\Aut}{\operatorname{Aut}}
\newcommand{\Inn}{\operatorname{Inn}}
\newcommand{\Out}{\operatorname{Out}}
\newcommand{\Ker}{\operatorname{Ker}}
\newcommand{\pcore}{\mathbf{O}}
\newcommand{\fl}{\operatorname{fl}}
\newcommand{\Irr}{\operatorname{Irr}}
\newcommand{\SL}{\operatorname{SL}}
\newcommand{\GL}{\operatorname{GL}}
\newcommand{\SU}{\operatorname{SU}}
\newcommand{\GU}{\operatorname{GU}}
\newcommand{\Sp}{\operatorname{Sp}}
\newcommand{\Spin}{\operatorname{Spin}}
\newcommand{\PGL}{\operatorname{PGL}}
\newcommand{\PSL}{\operatorname{PSL}}
\newcommand{\PSU}{\operatorname{PSU}}
\newcommand{\PGU}{\operatorname{PGU}}
\newcommand{\POmega}{\operatorname{P\Omega}}
\newcommand{\Sz}{\operatorname{Sz}}
\newcommand{\FF}{\mathbb{F}}
\newcommand{\NN}{\mathbb{N}}
\newcommand{\N}{\mathbf{N}}
\newcommand{\C}{\mathbf{C}}
\newcommand{\OO}{\mathbf{O}}
\newcommand{\F}{\mathbf{F}}

\renewcommand{\labelenumi}{\upshape (\roman{enumi})}

\providecommand{\St}{\mathsf{St}}
\providecommand{\E}{\mathbf{E}}
\providecommand{\PSp}{\mathrm{PSp}}
\providecommand{\Sp}{\mathrm{Sp}}
\providecommand{\SO}{\mathrm{SO}}
\providecommand{\re}{\mathrm{Re}}

\def\irrp#1{{\rm Irr}_{p'}(#1)}

\def\Z{{\mathbb Z}}
\def\C{{\mathbb C}}
\def\Q{{\mathbb Q}}
\def\sbs{\subseteq}
\def\irr#1{{\rm Irr}(#1)}
\def\zent#1{{\bf Z}(#1)}
\def\cent#1#2{{\bf C}_{#1}(#2)}
\def\syl#1#2{{\rm Syl}_#1(#2)}
\def\nor{\triangleleft\,}
\def\oh#1#2{{\bf O}_{#1}(#2)}
\def\Oh#1#2{{\bf O}^{#1}(#2)}
\def\zent#1{{\bf Z}(#1)}
\def\det#1{{\rm det}(#1)}
\def\ker#1{{\rm ker}(#1)}
\def\norm#1#2{{\bf N}_{#1}(#2)}
\def\alt#1{{\rm Alt}(#1)}
\renewcommand{\Im}{{\mathrm {Im}}}
\newcommand{\Ind}{{\mathrm {Ind}}}
\newcommand{\diag}{{\mathrm {diag}}}
\newcommand{\soc}{{\mathrm {soc}}}
\newcommand{\End}{{\mathrm {End}}}
\newcommand{\sol}{{\mathrm {sol}}}
\newcommand{\Hom}{{\mathrm {Hom}}}
\newcommand{\Mor}{{\mathrm {Mor}}}
\newcommand{\Mat}{{\mathrm {Mat}}}
\newcommand{\Tr}{{\mathrm {Tr}}}
\newcommand{\tr}{{\mathrm {tr}}}
\newcommand{\Gal}{{\it Gal}}
\newcommand{\Spec}{{\mathrm {Spec}}}
\newcommand{\ad}{{\mathrm {ad}}}
\newcommand{\Sym}{{\mathrm {Sym}}}
\newcommand{\Char}{{\mathrm {char}}}
\newcommand{\pr}{{\mathrm {pr}}}
\newcommand{\rad}{{\mathrm {rad}}}
\newcommand{\abel}{{\mathrm {abel}}}
\newcommand{\codim}{{\mathrm {codim}}}
\newcommand{\ind}{{\mathrm {ind}}}
\newcommand{\Res}{{\mathrm {Res}}}
\newcommand{\Ann}{{\mathrm {Ann}}}
\newcommand{\Ext}{{\mathrm {Ext}}}
\newcommand{\Alt}{{\mathrm {Alt}}}
\newcommand{\CC}{{\mathbb C}}
\newcommand{\ch}{{\mathcal C}}
\newcommand{\CB}{{\mathbf C}}
\newcommand{\RR}{{\mathbb R}}
\newcommand{\QQ}{{\mathbb Q}}
\newcommand{\ZZ}{{\mathbb Z}}
\newcommand{\NB}{{\mathbf N}}
\newcommand{\ZB}{{\mathbf Z}}
\newcommand{\OB}{{\mathbf O}}
\newcommand{\EE}{{\mathbb E}}
\newcommand{\PP}{{\mathbb P}}
\newcommand{\GC}{{\mathcal G}}
\newcommand{\HC}{{\mathcal H}}
\newcommand{\AC}{{\mathcal A}}
\newcommand{\SC}{{\mathcal S}}
\newcommand{\TC}{{\mathcal T}}
\newcommand{\CL}{{\mathcal C}}
\newcommand{\EC}{{\mathcal E}}
\newcommand{\Om}{\Omega}
\newcommand{\eps}{\epsilon}
\newcommand{\varep}{\varepsilon}
\newcommand{\al}{\alpha}
\newcommand{\chis}{\chi_{s}}
\newcommand{\sigmad}{\sigma^{*}}
\newcommand{\PA}{\boldsymbol{\alpha}}
\newcommand{\gam}{\gamma}
\newcommand{\lam}{\lambda}
\newcommand{\la}{\langle}
\newcommand{\ra}{\rangle}
\newcommand{\hs}{\hat{s}}
\newcommand{\htt}{\hat{t}}
\newcommand{\tn}{\hspace{0.5mm}^{t}\hspace*{-0.2mm}}
\newcommand{\ta}{\hspace{0.5mm}^{2}\hspace*{-0.2mm}}
\newcommand{\tb}{\hspace{0.5mm}^{3}\hspace*{-0.2mm}}
\def\skipa{\vspace{-1.5mm} & \vspace{-1.5mm} & \vspace{-1.5mm}\\}
\newcommand{\tw}[1]{{}^#1\!}
\renewcommand{\mod}{\bmod \,}

\title{Characters, Commutators and Centers of Sylow Subgroups}

\author{Gabriel Navarro}
\address{Department of Mathematics,   Universitat de Val\`encia, 46100 Burjassot,
Val\`encia, Spain}
\email{gabriel@uv.es}
\author{Benjamin Sambale}
\address{Institut f\"ur Algebra, Zahlentheorie und Diskrete Mathematik, Leibniz Universit\"at Hannover, Welfengarten~1, 30167 Hannover, Germany}
\email{sambale@math.uni-hannover.de}
\date{\today}
 
\thanks{
The research of the first author is supported by the Ministerio de Ciencia e Innovaci\'on PID2019-103854GB-I00. 
The work on this paper started with a visit of the second author in Valencia in February 2022. He thanks the first author for his great hospitality.
The second author is supported by the German Research Foundation (projects SA 2864/1-2 and SA 2864/3-1). Both authors thank Gunter Malle
for a careful reading of the manuscript and for discussions leading to settle the case $p=3$ of Theorem A. We also appreciate some computations provided by Jianbei An and some useful remarks by Robert Guralnick.}

\keywords{character tables; Sylow subgroups; center; derived subgroup}

\subjclass[2010]{Primary 20C15; Secondary 20C20}

\begin{abstract}
The character table of a finite group $G$ determines whether $|P:P'|=p^2$ and whether $|P:\zent P|=p^2$,
where $P$ is a Sylow $p$-subgroup of $G$. To prove the latter,
we give a detailed classification of those groups in terms of the generalized
Fitting subgroup.
\end{abstract}

\maketitle
 
\section{Introduction} 
Richard Brauer's Problem 12 from \cite{B} is a source of inspiration for
discovering interactions between global and local properties
of a finite group. Brauer asked what properties of a Sylow $p$-subgroup $P$ of a finite group $G$ can be detected by the irreducible complex
characters of $G$. One of the main objectives was to find if the irreducible characters of $G$ \textit{knew} whether
$P$ is abelian. This was first settled in the affirmative in \cite{KimmerleSandling}, then in \cite{NST2} with a precise algorithm, and finally in \cite{MN} (and \cite{KM}) with the
solution of Brauer's Height Zero conjecture for principal  blocks. 
The next natural step is to study how the characters of $G$
affect $P/P'$, where $P'=[P,P]$ is the commutator subgroup of $P$, and vice versa.  It was discovered in \cite{Ma} that
the character table of a $p$-group $P$ does not even determine $|P^{\prime\prime}|$, where $P^{\prime\prime}=[P^\prime,P^\prime]$,
answering in the negative a  question of Brauer.
The group $P/P'$ is an object of interest if only by the McKay conjecture, that asserts that
 the number of irreducible characters of $G$ of degree not divisible by $p$, equals the number
of conjugacy classes of the semidirect product of $P/P'$ with $\norm GP/P$.
In fact, the Galois version of the McKay conjecture \cite{NG} predicts that the exponent of the group $P/P'$ is known by the
characters of $G$, and indeed, this question was reduced to simple groups in \cite{NT} and proved in \cite{M} for $p=2$.
The non-abelian $2$-groups with $|P/P'|=2^2$ are the well-known maximal class 2-groups: dihedral, semidihedral and generalized quaternions.
(For  odd primes $p$,  there are many
$p$-groups $P$ with $|P/P'|=p^2$.) The character table of $G$ does detect whether a Sylow $2$-subgroup $P$ of $G$ is in this class (see \cite{NST} and the related result \cite{NRSV}).  For $p=3$, it was discovered in \cite{NST} that the Alperin--McKay conjecture indeed implies that
$|P/P'|=3^2$ if and only if the number of irreducible characters of degree not divisible by $3$ in the principal $3$-block of $G$ is $6$ or $9$. 
In general, it has remained a challenge to see if the character table of $G$ determines if $|P/P'|=p^2$.
\medskip

In this paper, we solve this problem in the affirmative.

\begin{thmA}
Let $G$ be a finite group, let $p$ be a prime, and let $P$ be a Sylow $p$-subgroup of $G$.
Then the character table of $G$ determines whether $|P:P'|=p^2$.
\end{thmA}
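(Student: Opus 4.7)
The strategy is to produce a character-table condition detecting $|P:P'|=p^2$, proved through local-global character theory and a reduction to almost simple groups. First, dispose of the abelian case: by the solution of Brauer's Height Zero Conjecture for principal blocks cited in the introduction, the character table knows whether $P$ is abelian, and $|P|=|G|_p$ is always visible; when $P$ is abelian, $|P:P'|=|P|$ and the question reduces to whether $|G|_p=p^2$. Henceforth assume $P$ non-abelian.

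\emph{Transfer to the normalizer.} I would next apply the Alperin--McKay conjecture (a theorem for principal blocks) to pass from $B_0(G)$ to $B_0(N)$ with $N:=\norm GP$. Since $P$ is a normal Sylow of $N$, a standard Clifford-theoretic count writes $|\irrp{B_0(N)}|$ as the number of $N/P$-orbits on the $|P:P'|$ linear characters of $P$, with $\OO_{p'}(N/P)$ acting trivially. Thus the invariant $|\irrp{B_0(G)}|$, visible in the character table, encodes joint information about $P/P'$ and the action of $N/P$ upon it; the aim is to show that when $|P:P'|=p^2$ this orbit count is forced into a small, readable set, extending the criterion $|\irrp{B_0(G)}|\in\{6,9\}$ obtained for $p=3$ in \cite{NST} to every prime, and to combine it with already-available character-table data such as the exponent of $P/P'$ (addressed in \cite{NT}, \cite{M}) to isolate $|P:P'|=p^2$ from all larger possibilities.

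\emph{Reduction and CFSG.} From there I would follow the template of \cite{NT}: induct along a chief series and use Clifford theory to reduce to the case where $G$ is almost simple, then verify the criterion directly using the classification of finite simple groups. The alternating and sporadic groups are handled by inspection of the known tables; the bulk of the proof falls on groups of Lie type. The main obstacle, as the acknowledgments to Malle indicate, will be the small-prime cases of Lie type in the defining characteristic, and most acutely $p=3$, where the proliferation of non-abelian $p$-groups with $|P:P'|=p^2$ for odd $p$ (emphasized in the introduction) and the rich orbit structure of $N/P$ on $P/P'$ demand a delicate case analysis to match the character-table invariant exactly.
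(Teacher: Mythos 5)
Your central detecting invariant does not work as proposed, for two reasons. First, the transfer to $\norm GP$ rests on the Alperin--McKay conjecture, which is not a theorem (not even for principal blocks); the paper itself only ever invokes it as a heuristic, and explicitly proves its consequence $|\irrp{B_0(G)}|\le p^2$ unconditionally in the special situation where it is needed. Second, and more seriously, even granting Alperin--McKay the quantity $|\irrp{B_0(G)}|$ counts \emph{orbits} of $\norm GP/P$ on the linear characters of $P$ lying in the principal block, so it conflates $|P:P'|$ with the action of $\norm GP/P$: a small orbit count is compatible with $|P:P'|>p^2$ and a rich action. You gesture at repairing this with the exponent of $P/P'$, but that input is itself only a reduction-to-simple-groups (proved only for $p=2$), so the circle does not close. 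Tellingly, the authors pose exactly your proposed criterion --- is $|P/P'|=p^2$ equivalent to $|\irrp{B_0(G)}|\le p^2$ for almost simple $G$? --- as an \emph{open question} at the end of their introduction. A proof cannot be routed through an invariant whose adequacy the authors themselves could not verify.

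The paper's actual route is different and avoids block counts entirely. After using \autoref{kernels} to read off the reduction to $\OO_{p'}(G)=1={\rm core}_G(P')$ from the character table, it proves a group-theoretic structure theorem (\autoref{structure}, via Tate's theorem and \autoref{lemhupp}): if $G$ is not almost simple and $|P:P'|=p^2$ with $P$ non-abelian, then $G$ has a minimal normal subgroup $N$ with $|G/N|_p=p$ whose components have cyclic Sylow $p$-subgroups. This yields the clean, unconditional criterion of \autoref{notalmsimp}: $|P:P'|=p^2$ if and only if some $p$-element $x$ has $|\cent Gx|_p=p^2$ --- a class-size condition immediately visible in the character table. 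There is no chief-series/Clifford-theory reduction of the character-theoretic statement to simple groups in the style of \cite{NT}; the almost simple case is instead handled directly, with ad hoc detecting data (existence of a $q$-element whose centralizer has order prime to $p$ for $\PSL^\epsilon$, and degrees of specific unipotent characters for $D_4$, $E_6$, $^2E_6$). Your proposal correctly anticipates that almost simple groups and small primes are the hard residue, but the bridge you build to get there --- and the invariant you would evaluate once there --- both have gaps that the paper's argument is specifically designed to circumvent.
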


Theorem A can be reformulated as follows: if $G$ and $H$ are finite groups with the same
character table, $P \in \syl p G$ and $Q \in \syl p H$, then $|P:P'|=p^2$ if and only if $|Q:Q'|=p^2$.
Note that if $|P:P'|=p^2$, then the isomorphism type of $P/P'$ is determined as well, because $P/P'$ is elementary abelian unless $P$ is cyclic of order $p^2$. 

\medskip
Aside from the commutator subgroup of $P$, it is natural to turn our attention to the
center $\zent P$ of $P$. Recall that $P$ is non-abelian if and only if $|P:\zent P|\ge p^2$. 
The quantity $|P:\zent P|$ is naturally related to characters. For instance,
a conjecture of G. R. Robinson~\cite{RobConjecture} (proved recently for $p>2$ in \cite{FLLMZ}) asserts that 
$|P:\zent P|$ bounds the heights of the irreducible characters of $G$.

\medskip
 In our second main result,
we prove the following.

\begin{thmB}
Let $G$ be a finite group, let $p$ be a prime, and let $P$ be a Sylow $p$-subgroup of $G$.
Then the character table of $G$ determines if $|P: \zent P|=p^2$.
\end{thmB}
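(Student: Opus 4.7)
The plan is to follow the overall philosophy of Theorem~A: combine block-theoretic invariants with an induction on $|G|$ that reduces to an almost simple situation, and then dispatch that case with the Classification of Finite Simple Groups (CFSG).

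The structural hypothesis is very restrictive. The equality $|P:\zent{P}|=p^2$ forces $P/\zent{P}$ to be elementary abelian of order $p^2$, so $P$ is either abelian or non-abelian of nilpotency class exactly $2$ with $|P'|=p$; in the non-abelian case $P$ is isoclinic to a non-abelian group of order $p^3$. The character table of $G$ already detects whether $P$ is abelian via the proved Brauer Height Zero Conjecture for principal blocks, so I may assume $P$ non-abelian, and in particular $|P:\zent{P}|\ge p^2$. For $p$ odd, the (now proved) Robinson Conjecture applied to the principal block $B_0(G)$ gives the necessary condition that the character heights in $B_0(G)$ be suitably bounded in terms of $|P:\zent{P}|$, which is already a character-table invariant; the aim is to strengthen such necessary conditions into a characterization using Theorem~A and finer block-theoretic data.

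The induction begins with the standard reduction to $\OO_{p'}(G)=1$, legitimate because $\OO_{p'}(G)$ is the intersection of the kernels of the irreducible characters in the principal $p$-block and hence readable from the character table. For a minimal counterexample, I would apply Clifford theory to any proper normal subgroup $N$ of order divisible by $p$: then $P\cap N$ is a non-trivial Sylow $p$-subgroup of $N$, and comparing the characters of $G$ lying over $\Irr(N)$-orbits with those of $N$ and of $G/N$, together with Theorem~A applied to these smaller groups, decomposes $P$ into strictly smaller factors and lets us conclude by induction. Ruling out such useful normal subgroups forces $G$ to be almost simple with non-abelian simple socle $S$ and $P\in\syl{p}{S}$.

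The remaining step --- and where I expect the main obstacle --- is the CFSG check, family by family, on the almost simple $G$. Alternating and sporadic groups admit direct verification from known character-table data. For groups of Lie type in defining characteristic $p$, the Sylow subgroup $P$ is a maximal unipotent subgroup, so the severe constraint $|P:\zent{P}|=p^2$ pins down the root system and rank very tightly, while the Steinberg character and parabolic structure of $G$ make the invariant essentially visible on the character table. The genuinely delicate case will be that of classical and exceptional groups of Lie type in non-defining characteristic, where the structure of $P$ is governed by Weyl-group data over a cyclotomic factor and the distinction $|P:\zent{P}|=p^2$ versus $|P:\zent{P}|\ge p^3$ must be extracted through a careful analysis of the principal block together with the Alperin--McKay correspondence, going beyond the invariants that sufficed for Theorem~A.
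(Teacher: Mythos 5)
Your opening observation is the right one and matches the paper's starting point: $|P:\zent P|=p^2$ forces $|P'|\le p$ (this is \autoref{TK}), which is what makes the problem tractable. But the core of your plan --- an induction over normal subgroups that terminates in an almost simple group with $P\in\syl p S$ --- has a genuine gap, in two respects. First, the inductive mechanism is not available: the character table of $G$ determines the character tables of its \emph{quotients}, not of its normal subgroups, so ``Theorem~A applied to these smaller groups'' and a Clifford-theoretic comparison with $\irr N$ cannot be read off the character table of $G$. Indeed, by Mattarei's examples the character table does not in general even determine whether a normal subgroup is abelian; the paper needs a bespoke argument (\autoref{lemcenter}, using extendability of characters of a normal $p$-subgroup when the quotient has cyclic Sylow $p$-subgroups) just to decide whether $\pcore_p(G)$ is abelian. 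Second, the reduction does not land on almost simple groups. The paper's actual result (\autoref{center}) is a structure theorem organized around $\F^*(G)=\pcore_p(G)*\E(G)$, and the possible configurations include $\pcore^{p'}(G)=\pcore_p(G)\times S$ with $\pcore_p(G)$ non-abelian, central products $\pcore_p(G)*C$ with $C$ quasisimple, and extensions by outer automorphisms acting on a component $T$; none of these is almost simple, and the detection criteria differ case by case (counting $p$-elements with $|\CB_G(x)|_p=|P|$, locating $x\in P\setminus\F^*(G)$ with $|G:\CB_G(x)|_p=p$, etc., with Gross and Glauberman needed to control $\CB_P(P\cap\F^*(G))$).

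The final CFSG step is also not the family-by-family check you anticipate. The paper shows (\autoref{lempodd}) that for odd $p$ a quasisimple group with non-abelian $P$ and $|P:\zent P|=p^2$ must have $P\cong p^{1+2}_+$ of order $p^3$: since $|P'|=p$, one produces a proper strongly closed subgroup of $P$ and invokes the Flores--Foote classification, so the ``delicate non-defining characteristic analysis via Weyl-group data'' never arises. For $p=2$ the components are pinned down by Walter's theorem and Gilman--Gorenstein to $A_7$, $3.A_7$ and covers of $\PSL(2,q)$ (\autoref{lemp2}). Your proposal correctly identifies the restrictive nature of the hypothesis but does not supply the two ingredients that actually carry the proof: a character-table-detectable structure theorem in terms of $\F^*(G)$, and the strongly-closed-subgroup argument that collapses the quasisimple case.
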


To prove Theorem B, we pin down the structure of $G$ in terms of composition factors under the assumption $|P:\ZB(P)|=p^2$ (see \autoref{center} below).
This extends the celebrated structure theorem on groups with abelian Sylow $p$-subgroups. 
\medskip

The question on whether the character table of a finite group $G$ determines $|P'|$ or $|\zent P|$ seems very difficult to solve.

\medskip
Finally, we discuss the algorithm to detect whether $|P:P'|=p^2$ from the character table, which leads to some interesting problems.
  Recall that the characters of $G$ determine the characters of $G/N$ whenever
 $N\nor G$, and therefore, when studying properties of a Sylow $p$-subgroup of $G$ and characters,
we may always assume that $\oh{p'} G$, the largest normal subgroup of $G$ of order not divisible by
$p$, is trivial. As usual, we denote by ${\rm Irr}_{p'}(G)$ to be the set of the irreducible complex characters of $G$ of degree not divisible by $p$. As we shall prove,  the following holds.

\begin{thmC}
Let $G$ be a finite group, $p$ a prime, $P \in \syl pG$, with $\oh{p'}G=1$.
Let $K$ be the intersection of the kernels of the irreducible
characters in ${\rm Irr}_{p'}(G)$, and write $\bar G=G/K$.
Assume that $G$ is not almost simple. Then $|P:P'|=p^2$ if
and only if there is a $p$-element $\bar x \in \bar G$ such that $|\cent{\bar G}{\bar x}|_p=p^2$.
\end{thmC}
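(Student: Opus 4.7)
My plan is to exploit the structural classification of finite groups with $|P:P'|=p^2$, established in the course of proving Theorem A, together with the observation that the right-hand condition is character-table invariant: $K$ is computable from the character table (as the intersection of kernels of irreducible $p'$-degree characters), hence so is the character table of $\bar G=G/K$, and in particular the $p$-parts of the centralizer orders on the $p$-classes of $\bar G$. Writing $\bar P=PK/K$ for a Sylow $p$-subgroup of $\bar G$, every $p$-element of $\bar G$ lifts to a $p$-element of $P$. The trivial cases $|P:P'|\le p$ are quickly dispatched: then $P$ is cyclic of order at most $p$ (by Burnside basis), so $|\bar P|\le p$ and every $p$-element of $\bar G$ has centralizer of $p$-part at most $p<p^2$, consistent with the desired equivalence.

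For the forward direction, assume $|P:P'|=p^2$. Since $\oh{p'}G=1$ and $G$ is not almost simple, the structure theorem obtained in the proof of Theorem A constrains $G$ to a short list of explicit configurations, involving $\oh p G$ and/or products of non-abelian simple components of $F^*(G)$. In each configuration I would exhibit a $p$-element $\bar x\in\bar G$ with $|\cent{\bar G}{\bar x}|_p=p^2$; the simplest candidate is $\bar x=1$, which works whenever $|\bar P|=p^2$ (equivalently $P'=P\cap K$), while for the remaining configurations a suitable $\bar x$ is chosen inside the image of a minimal normal subgroup, using the known Sylow structure of each component.

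For the backward direction, assume a $p$-element $\bar x\in\bar G$ satisfies $|\cent{\bar G}{\bar x}|_p=p^2$, and lift $\bar x$ to a $p$-element $x\in P$. The containment $\cent{G}{x}K/K\le\cent{\bar G}{\bar x}$ transfers a centralizer bound to $G$, giving control on $|\cent{G}{x}|_p$. Feeding this information together with the non-almost-simple hypothesis into the structure theorem, I would exclude the remaining alternative $|P:P'|\ge p^3$ by examining the Sylow structure of each component of $F^*(G)$ and how it meets $K$.

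The main obstacle is the backward direction in the case $|P:P'|\ge p^3$: the hypothesis on $\bar x$ is rather weak (it does not a priori limit $|\bar P|$), so ruling out all such configurations demands a delicate analysis driven by the classification. A secondary technical issue, essential when $\bar x=1$ is chosen in the forward direction, is establishing $P'=P\cap K$ in the relevant configurations, so that $|\bar P|=|P:P'|$; proving this identity is the real heart of the structural input.
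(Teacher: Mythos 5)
There is a genuine gap, and it sits exactly at the two places you single out as the ``main obstacle'' and the ``real heart''. The direction you consider hardest (from the existence of $\bar x$ to $|P:P'|=p^2$) is in fact the easy one and needs no classification: by \autoref{kernels} the hypothesis $\oh{p'}G=1$ forces $K={\rm core}_G(P')\le P'$, so $K$ is a normal $p$-subgroup with $|P:P'|=|\bar P:\bar P'|$, and one can work entirely in $\bar G$, where ${\rm core}_{\bar G}(\bar P')=1$. If a $p$-element $\bar x\in\bar P$ has $|\cent{\bar G}{\bar x}|_p=p^2$, then $|\cent{\bar P}{\bar x}|\le p^2$, and \autoref{TK}(iii) (a non-abelian $p$-group has maximal class if and only if some element has centralizer of order $p^2$) gives at once that $|\bar P|=p^2$ or $\bar P$ has maximal class; either way $|\bar P:\bar P'|=p^2$. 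Your plan to ``exclude $|P:P'|\ge p^3$ by a delicate analysis driven by the classification'' replaces this two-line argument by an unexecuted program, so this direction is not proved as written. Relatedly, the identity $P'=P\cap K$ that you call the real heart is false in general: $K={\rm core}_G(P')$ equals $P'$ only when $P'\nor G$ (for $G=S_4$ and $p=2$ one has $K=1\ne P'$, yet $|P:P'|=4$ and a transposition is the required witness). The only fact needed is the containment $K\le P'$, which is exactly what \autoref{kernels} supplies.

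In the forward direction your proposed witness is also wrong. The structure theorem (\autoref{structure}) produces a minimal normal subgroup $N$ with $|PN/N|=p$ whose simple components have cyclic Sylow $p$-subgroups, and the correct element lies \emph{outside} $N$: for $x\in P\setminus N$, \autoref{lemhupp} gives $|\cent{P\cap N}{x}|=p$, whence $|\cent Gx|_p=|\langle x\rangle\cent{P\cap N}{x}|=p^2$ (this is the computation in \autoref{notalmsimp}). Choosing $\bar x$ ``inside the image of a minimal normal subgroup'' fails: writing $N=T_1\times\cdots\times T_n$ and $Q_i=P\cap T_i$, a nontrivial element of $Q_1$ centralizes $Q_1\times T_2\times\cdots\times T_n$ and so has centralizer of $p$-part at least $|P\cap N|=|P|/p$, which exceeds $p^2$ as soon as $|P|>p^3$; and $\bar x=1$ works only in the special case $K=P'$ just discussed. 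So both directions are missing their decisive steps (\autoref{TK}(iii) for one, \autoref{kernels} together with \autoref{lemhupp} for the other); invoking the structure theorem is the right instinct, but only for the forward implication.
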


In the situation of Theorem~C, we shall prove that
 $K\le P'$ and $P/K\in\Syl_p(\bar{G})$ has maximal nilpotency class (see \autoref{kernels} and \autoref{TK} below).
But, in general, if $P$ has maximal class (and in particular $|P:P'|=p^2$), we do not necessarily find some $x\in P$ such that $|\CB_G(x)|_p=p^2$. 
Counterexamples to this are $\SL(2,9)$ for $p=2$, $\SL(3,19)$ for $p=3$ and $\SL(p,q)$ for $p\ge 5$ where $q-1$ is divisible by $p$ just once. This phenomenon is explained by the existence of \emph{pearls} in fusion systems (see \cite{GP}).

Since the character table of $G$ determines whether $G$ is almost simple (see \autoref{Kimmerle})
and, in this case, the isomorphism type of ${\rm soc}(G)$, we shall use ad-hoc arguments to settle 
this situation. Interestingly, we have detected that something might be occurring for this class of groups.
If $\irrp{B_0(G)}$ is the set of irreducible characters of degree not divisible
by $p$ in the principal $p$-block of $G$ almost simple, is it true that 
  $|P/P'|=p^2$ if and only if $|\irrp{B_0(G)}| \le p^2$?  At the time of this writing, 
  we do not know the answer to this question.

\section{Preliminaries on \texorpdfstring{$p$}{p}-Groups}

In this section we collect some general results on $p$-groups, which we shall later use.

\begin{lem}\label{TK}\hfill
\begin{enumerate}[(i)]
\item Let $P$ be a Sylow $p$-subgroup of the finite group $G$. Then $G'\cap\ZB(G)\cap P\le P'$.
\item Let $P$ be a finite $p$-group. Then $|P'|=p$ if and only if $\max\{|P:\CB_P(x)|:x\in P\}=p$. In particular, $|P:\ZB(P)|\le p^2$ implies $|P'|\le p$.
\item A non-abelian $p$-group $P$ has maximal nilpotency class if and only if there exists $x\in P$ such that $|\CB_P(x)|=p^2$.
\end{enumerate}
\end{lem}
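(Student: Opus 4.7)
For part (i), the plan is to use the transfer homomorphism $V \colon G \to P/P'$. Since $V$ factors through $G/G'$, it vanishes on every element of $G'$; at the same time, the standard transfer formula evaluates central elements as $V(z) = z^{[G:P]}P'$ for $z \in \ZB(G) \cap P$. Applying both facts to $z \in G' \cap \ZB(G) \cap P$ yields $z^{[G:P]} \in P'$, and since $z$ is a $p$-element while $\gcd([G:P], p) = 1$, this forces $z \in P'$.

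For part (ii), the forward direction is short: $|P'| = p$ makes $P'$ cyclic of order $p$ and central in $P$ (as $\Aut(\ZZ/p\ZZ)$ has order $p - 1$, coprime to $p$), so $P$ has class $\le 2$, the commutator becomes bilinear, and $|[x,P]| \le |P'| = p$ yields $|P:\CB_P(x)| \le p$ for every $x$; non-abelianness of $P$ makes the maximum equal to $p$. For the reverse direction (Knoche's theorem) one first shows that $P/\ZB(P)$ has exponent $p$, because $y^p$ lies in $\CB_P(x)$ for each $x$ and hence in $\ZB(P)$; then that $P$ has class $\le 2$; and finally one exploits the induced alternating bilinear form on $V := P/\ZB(P)$: for any nonzero $v \in V$ the subspace $[v,V]$ is an $\FF_p$-line in $P'$, and given any two nonzero $v_1, v_2 \in V$ a counting argument (a union of two proper hyperplanes of $V$ never covers $V$) produces $v_3 \notin H_{v_1} \cup H_{v_2}$, giving $[v_1,V] = [v_3,V] = [v_2,V]$, so $P'$ is a single line. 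The ``in particular'' clause avoids the full Knoche machinery: if $|P:\ZB(P)| \le p^2$, then $P/\ZB(P)$ is abelian of $\FF_p$-dimension $\le 2$, so $P$ has class $\le 2$, and the alternating bilinear commutator form on a space of dimension $\le 2$ has image of dimension $\le \binom{2}{2} = 1$, yielding $|P'| \le p$.

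For part (iii), the backward direction combines a commutator count with a classical structure result. From $|\CB_P(x)| = p^2$ and $|P| = p^n$ one gets $|[x,P]| = |P:\CB_P(x)| = p^{n-2}$; since $[x,P] \subseteq P'$ this gives $|P'| \ge p^{n-2}$. On the other hand, the abelianization of any non-abelian $p$-group is non-cyclic (otherwise the Frattini quotient would be cyclic and $P$ itself would be cyclic by Burnside's basis theorem), so $|P/P'| \ge p^2$. Combining yields $|P/P'| = p^2$, which is the classical criterion for maximal nilpotency class. For the forward direction, the case $|P| = p^3$ is immediate (every non-central element has centralizer of order $p^2$); for $|P| = p^n$ with $n \ge 4$ one appeals to the structure theory of maximal-class $p$-groups: $|\ZB(P)| = p$, the two-step centralizer $P_1 = \CB_P(\gamma_2(P)/\gamma_4(P))$ is a maximal subgroup, and any $x \in P \setminus P_1$ satisfies $\CB_P(x) = \langle x, \ZB(P)\rangle$ of order $p^2$ (cf.\ Huppert, \emph{Endliche Gruppen I}, \S III.14). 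The main obstacle lies precisely in this last step; a self-contained proof would require reproducing the non-trivial analysis of how $P/P_1$ acts on the lower central factors of $P$.
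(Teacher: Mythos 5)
The paper's own ``proof'' is a bare citation of Huppert (Satz~IV.2.2, Aufgabe~III.24b, Satz~III.14.23), so any genuine argument is automatically a different route. Your part (i) is the standard transfer proof and is correct. Your part (ii) is a correct outline of Knoche's theorem, with one step asserted rather than proved: exponent~$p$ of $P/\ZB(P)$ does \emph{not} by itself give class $\le 2$, so you still owe an argument there. It is available and short: if $[x,g]\ne 1$ then $\CB_P(x)$ and $\CB_P(g)$ are distinct maximal (hence normal) subgroups; normality gives $x^P\subseteq\CB_P(\CB_P(x))$, so $[x,g]$ is centralized by $\CB_P(x)$ and likewise by $\CB_P(g)$, and these two generate $P$. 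The ``in particular'' clause and the forward directions are fine.

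The genuine gap is in the backward direction of (iii). You correctly deduce $|P:P'|=p^2$ from the existence of $x$ with $|\CB_P(x)|=p^2$, but the concluding step --- that $|P:P'|=p^2$ ``is the classical criterion for maximal nilpotency class'' --- is false for odd $p$. Maximal class implies $|P:P'|=p^2$, not conversely: for $p\ge 5$ the relatively free group of nilpotency class $3$ and exponent $p$ on two generators has order $p^5$, derived quotient of order $p^2$, and class $3<4$. The paper itself stresses in the introduction that for odd $p$ there are many $p$-groups with $|P:P'|=p^2$, and this is precisely why \autoref{lem16}(ii) is stated only for $|P|=p^4$ --- and is proved there \emph{using} part (iii) of this lemma, so invoking that corollary would be circular. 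A correct proof of the backward direction must exploit the element $x$ itself, for instance: $\ZB(P)\langle x\rangle\le\CB_P(x)$ forces $|\ZB(P)|=p$; the map $h\mapsto[x,h]$ from the preimage of $\CB_{P/\ZB(P)}(\bar x)$ into $\ZB(P)$ shows $|\CB_{P/\ZB(P)}(\bar x)|\le p^2$, and one finishes by induction on $|P|$ (this is essentially Huppert's proof of Satz~III.14.23). The shortcut through the derived quotient is only valid for $p=2$ (Taussky) or $|P|\le p^4$.
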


\begin{proof}
These statements can be found in Huppert~\cite[Satz~IV.2.2, Aufgabe~III.24b, Satz~III.14.23]{H}. 
\end{proof}

\begin{cor}\label{lem16}
Let $P$ be a $p$-group of order $p^4$. 
\begin{enumerate}[(i)]
\item We have that $|P'|=p$ if and only if $|P:\ZB(P)|=p^2$. 
\item We have that $|P:P'|=p^2$ if and only if $P$ has maximal nilpotency class.
\end{enumerate}
\end{cor}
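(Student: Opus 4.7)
My approach will be to analyze a $p$-group $P$ of order $p^4$ using Lemma 1.5 together with the commutator pairing on class-$2$ groups. I handle both directions of each equivalence by ruling out the offending nilpotency class or central quotient size.

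For (i), the reverse direction is immediate from Lemma 1.5(ii): $|P:\ZB(P)|=p^2$ forces $|P'|\le p$, and since $P$ is then non-abelian, $|P'|=p$. For the forward direction, I assume $|P'|=p$ and note that $P'\le \ZB(P)$ (any normal subgroup of order $p$ in a $p$-group is central), so $P$ has class exactly $2$. The commutator then yields a bilinear, alternating, non-degenerate form $P/\ZB(P)\times P/\ZB(P)\to P'$; since $|P'|=p$ and the form is non-degenerate, $P/\ZB(P)$ must be an elementary abelian $p$-group of even $\FF_p$-dimension. Hence $|P/\ZB(P)|\in\{1,p^2,p^4,\ldots\}$, and combining $|P/\ZB(P)|\ge p^2$ ($P$ non-abelian) with $|P/\ZB(P)|\le p^3$ ($|\ZB(P)|\ge |P'|=p$) pins $|P/\ZB(P)|$ to $p^2$.

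For (ii), the reverse direction rests on the classical fact that a $p$-group of order $p^n$ of maximal class satisfies $|P/P'|=p^2$, specialized to $n=4$. For the forward direction, I assume $|P:P'|=p^2$, so $|P'|=p^2$ and $P$ is non-abelian, giving class $2$ or $3$; the task is to rule out class $2$. Under class $2$, $P'\le \ZB(P)$, and from $|\ZB(P)|\ge p^2$, $|P|=p^4$, and $P/\ZB(P)$ non-cyclic I deduce $\ZB(P)=P'$ and $|P/P'|=p^2$. Then $P/P'$ is rank $2$ (else cyclic $P/P'$ would make $P$ abelian), and by bilinearity of commutators in class-$2$ groups $P'=\langle [x,y]\rangle$ is cyclic of order $p^2$ for generators $xP',yP'$ of $P/P'$. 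The contradiction comes from $x^p\in P'\le \ZB(P)$ and $[x^p,y]=[x,y]^p$, forcing $[x,y]^p=1$ against $|[x,y]|=p^2$.

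The main obstacle I foresee is the forward direction of (ii), which requires pinning down the full structure of $P/P'$ (rank $2$, not cyclic) and exploiting both bilinearity of commutators in class-$2$ groups and the fact that $p$-th powers from $P/P'$ land in $P'$ to derive the contradiction. The pairing argument in (i) is a clean application of a standard tool and should go through without difficulty.
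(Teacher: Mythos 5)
Your proof is correct, but both halves take a genuinely different route from the paper's. For (i), the paper handles the forward direction by observing via \autoref{TK}(ii) that when $|P'|=p$ the centralizers of non-central elements are maximal subgroups, so that the only problematic case $|\ZB(P)|=p$ would force $P'=\ZB(P)=\Phi(P)$ and hence $P$ extraspecial of order $p^4$, which does not exist; you instead run the non-degenerate alternating commutator form on $P/\ZB(P)$ directly, which forces $|P/\ZB(P)|$ to be an even power of $p$. These are close cousins --- the nonexistence of extraspecial groups of order $p^4$ is itself standardly proved by exactly your symplectic argument --- but yours is self-contained where the paper's invokes a known classification. (One step you leave implicit: $P/\ZB(P)$ has exponent $p$ because $[x^p,y]=[x,y]^p=1$.) For (ii), the paper again leans on \autoref{TK}: since $|P'|=p^2\ne p$, part (ii) of that lemma produces an element with $|\CB_P(x)|=p^2$, and part (iii) then gives maximal class in two lines. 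You instead rule out class $2$ by hand, showing that in that case $P'=\ZB(P)$ would be cyclic of order $p^2$ generated by a single commutator $[x,y]$, while $[x,y]^p=[x^p,y]=1$ since $x^p\in P'\le\ZB(P)$. Your route avoids the centralizer characterization of maximal class entirely and is more elementary; the paper's is shorter because it exploits the lemma it has already set up. Both arguments are sound.
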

\begin{proof}
By \autoref{TK}, $|P:\ZB(P)|=p^2$ implies $|P'|=p$. Suppose by way of contradiction that $|P'|=|\ZB(P)|=p$. Then $P'=\ZB(P)$ is the intersection of the centralizers of $P$. By \autoref{TK}, the centralizers of non-central elements are maximal subgroups. Hence, $P'=\Phi(P)$, but there is no extraspecial group of order $p^4$.

Suppose that $|P:P'|=p^2$. Then, by \autoref{TK}, there exists $x\in P$ with $|\CB_P(x)|=p^2$ and consequently $P$ has maximal nilpotency class. Conversely, every group of maximal class satisfies $|P:P'|=p^2$.
\end{proof}

\begin{lem}\label{lemhupp}
Let $P$ be a $p$-group with a normal subgroup $N$ of index $p$. Then $|P:P'|=p^2$ if and only if $|\CB_{N/N'}(x)|=p$ for one (hence for every) $x\in P\setminus N$. 
\end{lem}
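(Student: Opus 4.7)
The plan is to turn $M := N/N'$ into a module for the cyclic group $\langle x \rangle$ of order $p$ (acting by conjugation) and to identify $P'/N'$ with the image $(x-1)M$ of the operator $x-1$ on $M$. From this identification a direct count gives $|P:P'| = p\cdot|\CB_{N/N'}(x)|$, which immediately yields the claim.

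First I would record two inclusions: $N' = [N,N] \le [P,P] = P'$, and $P' \le N$ since $P/N$ is cyclic of order $p$ (hence abelian). Thus $P'/N'$ sits inside the abelian group $M = N/N'$. Writing $\overline P := P/N'$ and, by abuse of notation, continuing to write $x$ for its image in $\overline P$, we have $\overline P = \bigcup_{i=0}^{p-1} x^i M$. The next step is to show that $\overline P' = (x-1)M$, viewing $M$ multiplicatively so that $(x-1)m$ denotes $xmx^{-1}m^{-1}$. The inclusion $(x-1)M \subseteq \overline P'$ is trivial since $xmx^{-1}m^{-1} = [x,m]$. For the reverse inclusion, an arbitrary commutator in $\overline P$ has the form $[x^i m_1, x^j m_2]$; since $M$ is abelian and normal in $\overline P$, a short expansion using standard commutator identities shows this is a product of terms of the form $[x^k, m]$. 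Each such term equals $(x^k - 1)m$, which lies in $(x-1)M$ because $x^k - 1 = (x-1)(1 + x + \cdots + x^{k-1})$ in the group ring $\ZZ[\langle x\rangle]$.

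Given this identification, the kernel of the map $m \mapsto (x-1)m$ on $M$ is exactly $\CB_{N/N'}(x)$, so $|P'/N'| = |M|/|\CB_{N/N'}(x)|$, giving $|P'| = |N|/|\CB_{N/N'}(x)|$ and therefore $|P:P'| = p\cdot|\CB_{N/N'}(x)|$. Hence $|P:P'|=p^2$ if and only if $|\CB_{N/N'}(x)|=p$. The independence of the choice of $x \in P\setminus N$ is then immediate: any two such elements differ by an element of $N$, which acts trivially on $M = N/N'$, so the centralizer is unchanged. The only real step is the identification $\overline P' = (x-1)M$; I expect this to be the main (if mild) obstacle, requiring care in moving between multiplicative and module notation, but the argument is the standard commutator computation in a semidirect product with abelian kernel.
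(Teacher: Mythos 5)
Your proof is correct and follows essentially the same route as the paper: both pass to $P/N'$ to reduce to the case of abelian $N$, and then identify $P'/N'$ with the image of the map $m\mapsto [x,m]$ on $M=N/N'$, whose kernel is $\CB_{M}(x)$. The paper simply cites Isaacs [I, Lemma~4.6] for this identification where you prove it inline; note also that the independence of the choice of $x\in P\setminus N$ already follows from your formula $|P:P'|=p\,|\CB_{N/N'}(x)|$, since the left-hand side does not depend on $x$ (your stated justification, that two such elements differ by an element of $N$, only covers elements in the same coset of $N$).
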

\begin{proof}
Note that $N'\unlhd P$ and $|P:P'|=|P/N':P'/N'|=|P/N':(P/N')'|$. We may therefore assume that $N$ is abelian. 
Now the claim follows from \cite[Lemma~4.6]{I}.
\end{proof}

\section{Theorem A and Non Almost Simple Groups}

In this Section we start the proof of Theorem A for non almost simple groups.
Recall that the core of a subgroup $H\le G$ is defined by $\mathrm{core}_G(H)=\bigcap_{g\in G}H^g$ where $H^g=g^{-1}Hg$. 

\begin{thm}\label{structure}
Let $G$ be a finite group, $p$ a prime, $P \in \syl pG$.
Assume that $\oh{p'} G=1={\rm core}_G(P')$.
If  $|P/P'|=p^2$ and $P$ is not abelian, then one of the following occurs:
\begin{enumerate}[(a)]
\item
$G$ has a minimal normal subgroup $N$ such that $|G/N|_p=p$. Moreover, the simple components of $N$ have cyclic Sylow $p$-subgroups.
\item
$G$ is almost simple.
\end{enumerate} 
\end{thm}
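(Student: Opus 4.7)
My plan is to pick a minimal normal subgroup $N$ of $G$ and split into cases according to whether $N$ is an elementary abelian $p$-group or a direct product $N=T_1\times\cdots\times T_k$ of isomorphic non-abelian simple groups. The hypothesis $\pcore_{p'}(G)=1$ forces $p\mid |N|$. The hypothesis $\mathrm{core}_G(P')=1$ forces $N\not\le P'$, which is a genuine constraint only when $N$ is an elementary abelian $p$-group (otherwise $N$ is not a $p$-group and so cannot lie inside the $p$-group $P'$).

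Assume first that $N$ is elementary abelian, so $N\le P$. Passing to $\bar{G}=G/N$ we have $\bar{P}'=P'N/N$, and hence
$$|\bar{P}:\bar{P}'|=|P:P'N|=p^2/|NP':P'|.$$
Since $N\not\le P'$, $|NP':P'|$ equals $p$ or $p^2$. If $|NP':P'|=p^2$, then $\bar{P}=\bar{P}'$ is a perfect $p$-group, hence trivial, forcing $P=N$ to be abelian, a contradiction. Hence $|NP':P'|=p$ and $|\bar{P}:\bar{P}'|=p$. By Burnside's basis theorem $\bar{P}$ is cyclic, hence abelian, so $\bar{P}=\bar{P}/\bar{P}'$ has order $p$. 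This yields $|G/N|_p=p$, and the simple components of $N$ are copies of $C_p$, which trivially have cyclic Sylow $p$-subgroups; conclusion (a) holds.

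Now assume $N$ is non-abelian. If $G$ is almost simple, conclusion (b) holds immediately. Otherwise $\soc(G)$ is not a single non-abelian simple group, so either $k\ge 2$ or there exists a second minimal normal subgroup $M$ of $G$ (necessarily non-abelian, since in this branch every minimal normal subgroup is non-abelian). In either subconfiguration $\soc(G)$ contains several independent non-abelian simple factors. I plan to exploit that $|P:P'|=p^2$ forces $P$ to be $2$-generated (again by Burnside's basis theorem), together with a careful analysis of the action of $P$ on the set of simple factors of $\soc(G)$ and the embedding of their Sylow $p$-subgroups in $P$, to show that each $T_i$ must have cyclic Sylow $p$-subgroup and that $|P:P\cap N|=p$, yielding conclusion (a).

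The main obstacle is the non-abelian case when $G$ is not almost simple. One must rule out configurations with multiple commuting or $P$-permuted non-abelian simple factors whose Sylow $p$-subgroups are non-cyclic while preserving $|P:P'|=p^2$: one has to control both the $P$-orbit structure on the set of simple components and the individual Sylow structures of the $T_i$, showing that any configuration beyond the one described in (a) either inflates $|P:P'|$ beyond $p^2$ or puts us back into case (b).
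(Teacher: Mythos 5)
Your treatment of the case where $N$ is an (elementary abelian, hence $p$-) minimal normal subgroup is correct: from $N\not\le P'$ you get $|P:P'N|\le p$, and Burnside's basis theorem then forces $|PN/N|=p$. This is a slightly different route from the paper, which handles all minimal normal subgroups uniformly by noting $|P:P'(P\cap N)|\le p^2$ and applying Tate's theorem when $P\cap N\le P'$; that uniform argument is what you will need anyway in the non-abelian case, where your observation $N\not\le P'$ says nothing about $P\cap N$.

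The genuine gap is that the non-abelian, non-almost-simple case --- the heart of the theorem --- is only announced as a plan (``I plan to exploit\dots a careful analysis\dots'') and never carried out. Concretely, at least three steps are missing. First, you do not establish $|PN/N|=p$ in this case: you must rule out both $P\le N$ (equivalently $G/N$ a $p'$-group) and $P\cap N\le P'$. The paper does the former by first reducing to $\Oh{p'}G=G$ via an induction on $K=\Oh{p'}G$ (which itself requires checking that the minimal normal subgroup of $K$ produced by induction is normal in all of $G$), and the latter by Tate's theorem combined with $\pcore_{p'}(G)=1={\rm core}_G(P')$. Second, ``$P$ is $2$-generated'' is far too weak to finish; the decisive tool is \autoref{lemhupp}, which converts $|P:P'|=p^2$ into $|\CB_{Q/Q'}(x)|=p$ for $x\in P\setminus Q$, where $Q=P\cap N=Q_1\times\cdots\times Q_n$. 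From this one deduces that $\langle x\rangle$ must permute the $Q_i$ transitively (otherwise $Q/Q'$ splits into two nontrivial $x$-invariant factors, each contributing a nontrivial fixed subgroup), hence $n=p$, and then that $Q_1/Q_1'$ must be cyclic, since two independent generators $y,z$ of $Q_1/Q_1'$ would yield two distinct subgroups $\langle\prod_i y^{x^i}Q'\rangle\ne\langle\prod_i z^{x^i}Q'\rangle$ inside the order-$p$ group $\CB_{Q/Q'}(x)$. Third, when a second minimal normal subgroup $M$ exists, one must note $|M|_p=p$ (since $M$ embeds in $G/N$) and rerun the first part of the argument on $M$ to land in conclusion (a). As written, your proposal proves only the abelian half of the theorem.
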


\begin{proof}
Assume that  $G$ is not almost simple.

First, we claim that we may assume that $\Oh{p'}G=G$. Indeed, assume that $K=\Oh{p'}G<G$ (recall that $K$ is the smallest normal subgroup with $p'$-index).
Notice that $P\sbs K$ and $\oh{p'}K=1$. Since $G=K\norm GP$, we also have $E={\rm core}_K(P') \nor G$ and $E=1$.
Suppose that $K$ is almost simple and $S=K^\infty \nor G$. Then $\cent KS=1$ and $\cent GS \nor G$ is a $p'$-group. Hence, $\cent GS=1$ and $G$ would be almost simple.  
Now by induction, we have that $K$ has a minimal normal subgroup $L$ such that $|G:L|_p=|K/L|_p=p$. 
Moreover, the simple components of $L$ have cyclic Sylow $p$-subgroups.
If $|L|_p=p$, then $|G|_p=p^2$, and $G$ has abelian Sylow $p$-subgroups,
against our assumption.
Let $g \in G$. Then $L^g$ is a minimal normal subgroup
of $K$. If $L^g \ne L$, then $L \cap L^g=1$ and $p^2$ would divide $|K/L|$, which cannot. Therefore $L^g=L$ for all $g \in G$, and we see that $L$ is a minimal normal subgroup of $G$ too.

Let $N$ be a minimal normal subgroup of $G$. Note that
\[|PN/N:(PN/N)'|=|P/P\cap N:(P/P\cap N)'|=|P:P'(P\cap N)|\le|P:P'|=p^2.\]
If $|P:P'(P\cap N)|=p^2$, then $P\cap N\le P'$. By Tate's theorem, $N$ has a normal $p$-complement. 
Thus $N$ is a $p$-group and $N \sbs P'$. This is not possible. 
If $G/N$ is a $p'$-group, then $N=G$, and $N$ is simple, against the hypothesis. 

So we may assume that $PN/N$ has  order $p$. If $N$ is elementary abelian, then the simple components of $N$ are indeed cyclic.  We may therefore assume that $N$ is a direct product of isomorphic non-abelian simple groups.

If $M$ is another minimal normal subgroup, then $N\cap M=1$, and $|M|_p=p$ (because
$\oh{p'}G=1$ and $|G/N|_p=p$). Then we are done by the previous paragraph.
So we have that $N$ is the unique minimal normal subgroup of $G$. 

Write $N=T_1 \times \cdots \times T_n$, where $T_i$ are simple groups which are
$G$-conjugate. Since $G$ is not
almost-simple, we have that $n>1$. Write $Q=P\cap N=Q_1 \times \cdots \times Q_n$, where $Q_i=T_i\cap P$. By way of contradiction, suppose that the $Q_i$ are non-cyclic.
Take $x\in P-Q$. 
Since we have that $|P:P'|=p^2$, then it follows that $|\cent {Q/Q'}x|=p$
by \autoref{lemhupp}. If $\langle x\rangle$ does not act transitively on $\{Q_1, \ldots, Q_n\}$, then we could write $Q/Q'=U \times V$ for non-trivial $x$-invariant subgroups $U$ and $V$.
Then $\cent Ux$ and $\cent Vx$ are two non-trivial subgroups of 
$\cent {Q/Q'}x$, which is impossible. Therefore, we can assume that $Q_i=(Q_1)^{x^{i-1}}$,
and that $n=p$. Since $Q_1$ is non-cyclic, so is $Q_1/Q_1'$.
Hence, there are $1\ne y,z \in Q_1$ such that the subgroups $\langle yQ_1'\rangle\ne \langle zQ_1'\rangle$ have order $p$.
Then $\langle \prod_{i=1}^p y^{x^i} Q' \rangle$ and 
$\langle \prod_{i=1}^p z^{x^i} Q' \rangle$ are two different subgroups of $\cent{Q/Q'} x$,
and this is the final contradiction.
\end{proof}

\begin{thm}\label{notalmsimp}
Let $G$ be a finite group, $p$ a prime, $P \in \syl pG$ with $|P|\ge p^2$.
Assume that $\oh{p'} G=1={\rm core}_G(P')$.
 Suppose that $G$ is not almost simple.
Then $|P/P'|=p^2$ if and only if there is a $p$-element $x \in G$ such that $|\cent Gx|_p=p^2$.
\end{thm}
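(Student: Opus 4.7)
The plan is to prove both directions via a common \emph{Sylow-conjugacy reduction}: for any $p$-element $x \in G$, there exists a $G$-conjugate $y \in P$ of $x$ with $|\cent Py| = |\cent Gx|_p$. I would obtain this by choosing $S \in \syl p{\cent Gx}$ (noting $\la x \ra \le S$ because $x \in \zent{\cent Gx}$), embedding $S$ in a Sylow $p$-subgroup $P^* = P^g$ of $G$, and observing that the sandwich $S \le \cent{P^*}x \le S'$ with $S' \in \syl p{\cent Gx}$ forces $\cent{P^*}x = S$; transporting back by $g^{-1}$ then yields $y := x^{g^{-1}} \in P$ with $|\cent Py| = |\cent Gx|_p$.

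For the reverse direction, the reduction produces $y \in P$ with $|\cent Py| = p^2$. If $P$ is abelian this forces $|P| = p^2$. If $P$ is non-abelian, the inclusion $y^P \subseteq yP'$ gives $|P|/p^2 = |y^P| \le |P'|$, so $|P:P'| \le p^2$; combined with the standard inequality $|P:P'| \ge p^2$ for non-abelian $p$-groups (otherwise $P$ would be cyclic), this yields $|P:P'| = p^2$. Equivalently one could invoke \autoref{TK}(iii) to identify $P$ as having maximal nilpotency class and appeal to the classical fact that such groups satisfy $|P:P'|=p^2$.

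For the forward direction, the abelian case is immediate: $|P| = p^2$, so take $x = 1$ and $|\cent G 1|_p = |G|_p = p^2$. When $P$ is non-abelian, I would invoke \autoref{structure} (available because $G$ is not almost simple): there is a minimal normal subgroup $N \trianglelefteq G$ with $|G/N|_p = p$ whose simple components have cyclic Sylow $p$-subgroups. Setting $Q := P \cap N$, we have $Q \trianglelefteq P$ with $|P:Q| = p$, and since $\oh{p'}G = 1$, either $N$ is elementary abelian (and $Q = N$) or $Q = Q_1 \times \cdots \times Q_n$ with each $Q_i = P \cap T_i$ cyclic; in both cases $Q' = 1$. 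Then \autoref{lemhupp} applied to any $x \in P \setminus Q$ would give $|\cent Qx| = |\cent{Q/Q'}{x}| = p$ and hence $|\cent Px| = p \cdot p = p^2$.

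The step I expect to be the main obstacle is the upgrade from the $P$-local bound $|\cent Px| = p^2$ to the global bound $|\cent Gx|_p = p^2$, since in principle $\cent Px$ might not be a Sylow $p$-subgroup of $\cent Gx$. Here I would reuse the Sylow-conjugacy reduction applied to $x$: $|\cent Gx|_p = |\cent Py|$ for some $y \in x^G \cap P$. Since $N \trianglelefteq G$ and $x \notin N$, any such $y$ also lies outside $N$, hence in $P \setminus Q$, and a second application of \autoref{lemhupp} gives $|\cent Py| = p^2$, finishing the proof.
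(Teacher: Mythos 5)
Your proposal is correct and follows essentially the same route as the paper: the reverse direction reduces to a $p$-local statement via Sylow conjugacy and the characterization of groups with a self-centralizing subgroup of order $p^2$ (your class-equation computation $|P|/p^2=|y^P|\le|P'|$ is an elementary substitute for \autoref{TK}(iii)), and the forward direction invokes \autoref{structure} to produce the minimal normal subgroup $N$ with $|G/N|_p=p$ and abelian $P\cap N$, then applies \autoref{lemhupp} inside a Sylow subgroup chosen so that the centralizer computation is global. The paper packages the last step by picking $Q\in\Syl_p(G)$ with $\CB_Q(x)\in\Syl_p(\CB_G(x))$ rather than conjugating $x$ back into $P$, but this is the same argument.
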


\begin{proof}
Assume that there is a $p$-element $x \in G$ such that $|\cent Gx|_p=p^2$.
By replacing $x$ by some $G$-conjugate, we may assume that $x \in  P$.
Then $|\cent Px|\le p^2$. By \autoref{TK}, we have either $|P|=p^2$ or $P$ has maximal class. In any case, $|P:P'|=p^2$.

Assume now that $\oh{p'}G=1={\rm core}_G(P')$, that $G$ is not almost simple and that $|P:P'|=p^2$.
We show that there is a $p$-element $x \in G$ such that $|\cent Gx|_p=p^2$.
This is clear if $P$ is abelian, so assume that $P$ is not abelian.

By \autoref{structure}, 
let $N$ be a minimal normal subgroup of $G$
such that $PN/N$ has  order $p$ and such that the
Sylow subgroups of $N$ are abelian. Let $x \in P-N$.
Let $Q\in\syl p{G}$ such that $\cent Qx\in\syl p{\cent G x}$. Then $Q=\langle x\rangle(Q\cap N)$. By \autoref{lemhupp}, $|\cent{Q\cap N}{x}|=p$ and 
\[|\cent Gx|_p=|\cent Qx|=|\langle x\rangle\cent{Q\cap N}{x}|=p^2.\qedhere\]
\end{proof}

If $|P:P'|=p^2$, then the Alperin--McKay conjecture
(together with the $k(GV)$-theorem) implies that $|\irrp{B_0(G)}|\le p^2$ where $B_0(G)$ denotes the principal $p$-block of $G$. In the situation of \autoref{notalmsimp} we can prove this without relying on the Alperin--McKay conjecture.

\begin{thm}
Let $G$ be a finite group with a $p$-element $x\in G$ such that $|\CB_G(x)|_p\le p^2$. Then $|\irrp{B_0(G)}|\le p^2$.
\end{thm}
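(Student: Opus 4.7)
My plan is to reduce the Sylow $p$-subgroup $P$ of $G$ to one of two very restricted types and then invoke known block-theoretic bounds in each case. First I would pass to $\bar{G}=G/\OO_{p'}(G)$: every $\chi\in\Irr(B_0(G))$ contains $\OO_{p'}(G)$ in its kernel, so $|\irrp{B_0(G)}|=|\irrp{B_0(\bar G)}|$, and since $\OO_{p'}(G)$ is a $p'$-group we still have $|\CB_{\bar G}(\bar x)|_p=|\CB_G(x)|_p\le p^2$. After replacing $x$ by a $G$-conjugate I may further assume $x\in P$ for some $P\in\syl p{\bar G}$. Then $\CB_P(x)\le\CB_{\bar G}(x)$ forces $|\CB_P(x)|\le p^2$, and by \autoref{TK}\,(iii) there are only two possibilities: either $|P|\le p^2$ (so $P$ is abelian), or $P$ has maximal nilpotency class; in both cases $|P:P'|\le p^2$.

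In the abelian case, the Brauer Height Zero Conjecture for abelian defect groups (a theorem in that direction by Kessar--Malle) gives $\Irr(B_0(G))=\irrp{B_0(G)}$, so it remains to show $k(B_0(G))\le p^2$: for $|P|\le p$ this is Dade's cyclic-defect theory, and for $|P|=p^2$ it is Brauer's $k(B)$-conjecture, which is settled for defect groups of order $p^2$. In the maximal-class case $|P:P'|=p^2$, and I would appeal to the inequality $k_0(B_0(G))\le |P:P'|$, known for defect groups of maximal nilpotency class: for $p=2$ this is the classical analysis of blocks with dihedral, semidihedral or generalized quaternion defect (Brauer, Olsson, Erdmann), and for $p$ odd it is covered by Sambale's structural work on such blocks.

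The hard part I anticipate is precisely the maximal-class case for odd $p$: this is exactly the setting in which one normally invokes the Alperin--McKay conjecture together with the $k(GV)$-theorem (as in the theorem immediately preceding in the excerpt), and the whole point of the present statement is to avoid Alperin--McKay, which forces reliance on the (substantial, but unconditional) structural theory of blocks with defect group of maximal class.
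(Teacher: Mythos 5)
Your reduction discards the hypothesis at exactly the point where it is needed. After using $|\CB_G(x)|_p\le p^2$ only to conclude that $P$ is abelian of order at most $p^2$ or of maximal class, you try to finish the maximal-class case by quoting $k_0(B_0)\le|P:P'|$ for maximal-class defect groups. For $p=2$ this is indeed covered by the Brauer--Olsson analysis of dihedral, semidihedral and quaternion defect groups, but for odd $p$ it is Olsson's conjecture for this class of defect groups, and that is \emph{not} an unconditional theorem: the paper states immediately before this result that $|P:P'|=p^2$ yields $|\irrp{B_0(G)}|\le p^2$ only modulo the Alperin--McKay conjecture (plus the $k(GV)$-theorem), and the entire point of the present statement is to obtain the bound unconditionally from the stronger centralizer hypothesis. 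The ``pearl'' examples discussed in the introduction ($\SL(2,9)$, $\SL(3,19)$, $\SL(p,q)$ with $q-1$ divisible by $p$ exactly once) show why no unconditional substitute is available: a maximal-class Sylow $p$-subgroup need not admit any $p$-element whose centralizer has $p$-part $p^2$, so the subsection machinery that proves Olsson's conjecture in the presence of such an element cannot be applied, and already for extraspecial defect groups of order $p^3$ and exponent $p$ the invariant $k_0(B)$ is not determined in general. (Your abelian case is also heavier than necessary and leans on the $k(B)$-conjecture for $C_p\times C_p$ with arbitrary inertial quotient, whose status is more delicate than you suggest.)

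The repair is to keep the element $x$. Choosing $x\in P$ with $\CB_P(x)\in\Syl_p(\CB_G(x))$, the centralizer $\CB_P(x)$ is a defect group of the principal block $b_0$ of $\CB_G(x)$, and by Brauer's third main theorem $(x,b_0)$ is a $B_0$-subsection. The hypothesis gives $|\CB_P(x)|\le p^2$, so $\CB_P(x)/\langle x\rangle$ is cyclic, and \cite[Proposition~4.3]{Sambale} yields $|\irrp{B_0(G)}|=k_0(B_0(G))\le|\CB_P(x)|\le p^2$ in one step, with no case distinction and no appeal to a classification of blocks by defect group. This is the paper's proof; your case division replaces a hypothesis that hands you exactly the right subsection with a weaker structural statement that is known to be insufficient.
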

\begin{proof}
Let $P$ be a Sylow $p$-subgroup of $G$ containing $x$. Then $\CB_P(x)$ is a Sylow $p$-subgroup of $\CB_G(x)$. Hence, $\CB_P(x)$ is a defect group of the principal block $b_0$ of $\CB_G(x)$. By Brauer's third main theorem, $(x,b_0)$ is a $B_0$-subsection. Since $\CB_P(x)/\langle x\rangle$ is cyclic, the claim follows from \cite[Proposition~4.3]{Sambale}.
\end{proof}

\section{Character Tables}

As is well-known, the character table of $G$ determines the sizes of the conjugacy classes of $G$ (by the second orthogonality relation),
and the lattice of normal subgroups of $G$,
together with their orders. (This is obtained by looking at all the intersections of 
kernels of the irreducible characters.)  In particular, it is easy to detect from the character table the orders
of the chief factors of $G$, or if a normal subgroup is nilpotent or solvable.
(As we already said, it is not possible to know if this normal subgroup is abelian or not, by \cite{Ma}.)
Also, as we have mentioned,
 the character table of $G$  determines the character table of its factor groups.

We now recall a theorem of Higman (see \cite[Corollary~7.18]{N}) asserting that the character table of $G$ determines the prime divisors of the order of elements $x\in G$ (strictly speaking $x$ represents a column of the character table). In particular, we can tell which columns of the character table belong to $p$-elements or to $p'$-elements. 
 
To detect the non-abelian composition factors of a group is something much more subtle. In the following we denote the generalized Fitting subgroup of $G$ by $\F^*(G)=\F(G)*\E(G)$, where $\E(G)$ is the layer of $G$. 
The following collects what we shall need.

\begin{thm}\label{Kimmerle}
Let $G$ be a finite group.
\begin{enumerate}[(i)]
\item Let $S,T$ be finite simple groups and $s,t$ positive integers such that $|S|^s=|T|^t$. Then $s=t$ and one of the following holds
\begin{itemize}
\item $S\cong T$.
\item $\{S,T\}=\{A_8,\PSL(3,4)\}$.
\item $\{S,T\}=\{B_n(q),C_n(q)\}$ for some $n\ge 3$ and an odd prime power $q$.
\end{itemize}

\item Let $N$ be a minimal normal subgroup of $G$ such that $|N|=|A_8|^k$. Then $N\cong A_8^k$ if and only if there exists a $2$-element $x\in N$ such that $|G:\CB_G(x)|=105k$. 

\item Let $N$ be a minimal normal subgroup of $G$ such that $|N|=|B_n(q)|^k$. Then $N\cong B_n(q)^k$ if and only if there exists a $2$-element $x\in N$ such that $|G:\CB_G(x)|=\frac{1}{2}q^n(q^n\pm1)$ where $q^n\equiv\pm1\pmod{4}$. 

\item The character table of $G$ determines the isomorphism types of all minimal normal subgroups.

\item The character table of $G$ determines all chief series of $G$, that is, the isomorphism type of the chief factors and the order in which they appear.

\item The character table of $G$ determines the composition factors of $G$.

\item The isomorphism type of a quasisimple group is determined by its character table.

\item The character table of $G$ determines the size and the composition factors of the generalized Fitting subgroup $\F^*(G)$.
\end{enumerate}
\end{thm}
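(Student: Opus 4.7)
The plan is to treat parts (i)--(iii) as the arithmetic/classification core, and then bootstrap parts (iv)--(viii) by induction on a chief series, using the standard fact that the character table of $G$ determines the character table of every quotient $G/N$: the kernels of irreducible characters are determined, so for each normal $N$ the set of characters of $G$ containing $N$ in their kernel is known, and these are exactly the inflations of $\irr{G/N}$.

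For (i) I would invoke the classical theorem of Artin, completed by CFSG, that the only coincidences $|S|=|T|$ among pairwise nonisomorphic finite nonabelian simple groups are the pair $\{A_8,\PSL(3,4)\}$ and the families $\{B_n(q),C_n(q)\}$ with $n\ge 3$ and $q$ odd. To upgrade $|S|=|T|$ to $|S|^s=|T|^t$, note first that the equation forces the same set of prime divisors, hence (by known characterizations of simple groups by their order polynomial) the same defining characteristic and Lie rank; then comparing $p$-adic valuations at the defining characteristic and at a Zsigmondy prime forces $s=t$, reducing to Artin's statement. For (ii) and (iii), the conjugacy class sizes of elements of $N$ are determined by the character table via the second orthogonality relation. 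The two candidate simple factors are distinguished by the centralizer order of a suitable $2$-element: in $A_8$ an involution has class size $105$ while in $\PSL(3,4)$ the involutions have smaller class size; likewise for $B_n(q)$ versus $C_n(q)$ one compares the classes of long-root involutions. The explicit numerical values in the statement then single out the correct factor.

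For (iv), let $N$ be a minimal normal subgroup. The lattice of normal subgroups with their orders is character-theoretic, so $|N|$ is known, as are the class sizes of elements of $N$. If $N$ is abelian it is elementary abelian of known prime and rank. If $N$ is nonabelian then $N\cong S^k$ with $|N|=|S|^k$, and (i) pins down $S$ up to the two exceptional ambiguities, which are then resolved by (ii) or (iii). For (v) I induct on $|G|$: pick a minimal normal $N$, identify it by (iv), pass to $G/N$ whose character table is determined, and iterate; this reconstructs every chief factor in order. Part (vi) is then immediate since composition factors refine chief factors, and the composition factors of an elementary abelian chief factor are determined by its order and defining prime.

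For (vii), a quasisimple group $L$ satisfies $L=L'$ and $L/\zent L$ simple, and the center is recovered as $\zent L=\bigcap_{\chi\in\irr L}\{g\in L:|\chi(g)|=\chi(1)\}$, so both $\zent L$ and $L/\zent L$ are character-theoretically determined; since a perfect central extension of a given simple group with prescribed center is unique as a quotient of the Schur cover, $L$ itself is determined. For (viii), the Fitting subgroup $\F(G)$ is the largest nilpotent normal subgroup and is read off the normal-subgroup lattice with orders. The quotient $\F^*(G)/\F(G)\cong \E(G)/\zent{\E(G)}$ is a direct product of nonabelian simple groups whose isomorphism types and multiplicities coincide with the nonabelian composition factors of $G$, which are known by (vi); hence $|\F^*(G)|=|\F(G)|\cdot|\F^*(G)/\F(G)|$ and the composition factors of $\F^*(G)$ are both determined. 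The main obstacle will be (i): the step from $|S|=|T|$ to $|S|^s=|T|^t$ requires ruling out accidental numerical coincidences of Lie-type order polynomials of different ranks or characteristics, which is arithmetically delicate though standard once CFSG is invoked; everything else is a clean induction along the chief series.
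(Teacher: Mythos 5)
Your treatment of (i)--(vi) matches the paper's in substance: the paper simply cites the Cameron--Teague theorem (which is exactly the statement for $|S|^s=|T|^t$, so you need not re-derive the upgrade from $|S|=|T|$, though your sketch of that upgrade is too vague to stand on its own) and Kimmerle's propositions for (ii)--(iii), then runs the same induction along a chief series. The genuine problems are in (vii) and (viii).

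For (vii), your key assertion --- that a perfect central extension of a given simple group with prescribed center is unique as a quotient of the Schur cover --- is false whenever the Schur multiplier $M(\bar G)$ is non-cyclic: two subgroups $W_1,W_2\le M(\bar G)$ with $M(\bar G)/W_1\cong M(\bar G)/W_2$ need not be in the same orbit under $\Out(\bar G)$, and the resulting quasisimple groups need not be isomorphic. This is precisely the hard case: for $\bar G\cong\PSL(3,4)$ the multiplier is $C_{12}\times C_4$ and there are $14$ quasisimple groups, several sharing the same order and the same isomorphism type of center; for $\PSU(4,3)$ the multiplier is $C_{12}\times C_3$ and there are $12$. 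The paper has to enumerate the six exceptional groups with non-cyclic exceptional multiplier and verify (partly by computer, for $\PSL(3,4)$ and $\PSU(4,3)$) that the resulting character tables are pairwise distinct. Your argument gives no way to separate these groups. For (viii), your identification of the composition factors of $\E(G)/\zent{\E(G)}$ with the non-abelian composition factors of $G$ is also false: $G/\F^*(G)$ can contribute non-abelian composition factors that do not occur in the layer (e.g.\ $G=\FF_2^4\rtimes A_5$ has $\F^*(G)=\F(G)$ and $\E(G)=1$ but composition factor $A_5$). More fundamentally, you give no procedure to locate $\E(G)$ inside the normal-subgroup lattice: from the character table one can spot perfect normal subgroups $N$ with a nilpotent normal subgroup $Z$ such that $N/Z$ is a product of isomorphic simple groups, but one must still decide whether $Z=\zent N$, i.e.\ whether $N$ acts trivially on $Z$. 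That decision is the crux of the paper's argument for (viii) (bounding the relevant chief factors via Schur multipliers and minimal faithful representation degrees), and it is entirely missing from your proposal.
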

\begin{proof}\hfill
\begin{enumerate}[(i)]
\item This is a theorem of Cameron--Teague (see \cite[Theorem~6.1]{KLST}).
\item[(ii),(iii)] See \cite[Proposition~6.5, 6.6]{Kimmerle}. A weaker version is given in \cite[Lemma~1.9]{KimmerleSandling}.
\item[(iv)] This follows from (i)--(iii).
\item[(v)] This follows from (iv) by induction on $|G|$. It is also stated in \cite[Theorem~5]{KimmerleSandling}.
\item[(vi)] Every chief series can be refined to a composition series (but not every composition series arises in this way). 
\item[(vii)] Let $G$ be quasisimple. By (vi), the character table of $G$ determines the isomorphism type of the simple group $\bar{G}:=G/\ZB(G)$. 
If $\bar{G}$ has cyclic Schur multiplier, then $|G|$ is uniquely determined by its order. We may therefore assume that $\bar{G}$ is a simple group of Lie type with exceptional Schur multiplier. Recall that the exceptional part of the Schur multiplier (called $e$ in the Atlas~\cite[Table~5]{Atlas}) is a power of the defining characteristic $p$. On the other hand, the generic part of the Schur multiplier (called $d$) is always coprime to $p$. Thus, we may assume that $e$ is non-cyclic. This only leaves six exceptional groups $\bar{G}$. If $\bar{G}\in\{\PSU(6,2),\Sz(8),\POmega^+(8,2),{^2E_6(2)}\}$, then $e$ is a Klein four-group and its involutions are permuted transitively by an outer automorphism of order $3$. It follows that there is only one quasisimple group up to isomorphism for each possible order. Now let $\bar{G}\cong\PSL(3,4)$. Then the Schur multiplier is isomorphic to $C_{12}\times C_4$ and the universal covering group can be constructed as $\mathtt{PerfectGroup}(967680,4)$ in \cite{GAP48}. In this way we can check that there are 14 quasisimple groups and they have distinct character tables. Finally let $\bar{G}\cong\PSU(4,3)$. Here the Schur multiplier is $M\cong C_{12}\times C_3$ and $\Out(\bar{G})\cong D_8$. The Atlas tells us that $D_8$ acts faithfully on $\pcore_3(M)\cong C_3^2$. Hence, the four subgroups of order $3$ in $M$ fall into two orbits under $D_8$. This leads to 12 quasisimple groups and the corresponding character tables are available in \cite{GAP48}. Again we check that the character tables are different.

\item[(viii)] Since the Fitting subgroup $\F(G)$ is the largest nilpotent normal subgroup, $|\F(G)|$ is determined by the character table. Recall that the layer $\E(G)$ is a central product of normal subgroups $N$ such that $N'=N$ and $N/\ZB(N)$ is a direct product of isomorphic non-abelian simple groups. It is easy to spot chief factors $N/Z\cong T_1\times\ldots\times T_n$ from the character table such that $N'=N$, $Z$ is nilpotent and $T_1\cong\ldots\cong T_n$ are non-abelian simple. It remains to decide if $\ZB(N)=Z$. 
This is obvious if $Z=1$. Hence, let $Z\ne 1$.
Since the isomorphism type of $T_1$ is known, so is the Schur multiplier $M(N/Z)\cong M(T_1)^n$. Let $Z/W$ be another chief factor of $G$. Then $Z/W$ is elementary abelian, say $Z/W\cong C_p^r$ for some prime $p$. 
As in (vii) we see that the Sylow $p$-subgroup of $M(T_1)$ is cyclic, apart from finitely many cases where $p\le 3$ and the $p$-rank of $M(T_1)$ is $2$. If $r>2n$, then $\zent N<Z$ and we are done. Now let $r\le 2n$. Suppose that some $T_i$ acts non-trivially on $Z/W$. Since $T_1,\ldots,T_n$ are conjugate in $G$, they all act non-trivially, i.\,e. $N/Z$ acts faithfully on $Z/W$. Since the minimal degree of a faithfully representation of $N/Z$ over $\FF_p$ is at least $2n$, it follows that $r=2n$ and therefore $p\le 3$. 
However, $T_1$ cannot embed into the solvable group $\GL(2,p)$. This contradiction shows that $N/Z$ acts trivially on $Z/W$. We can now consider the action of $N/W$ on the next lower chief factor $W/W_1$. It is well-known that a Schur cover of $N/W$ is also a Schur cover of $N/Z$. Hence, $|W/W_1|$ is bounded in the same way as $Z/W$. So we can figure out whether $N/W$ acts trivially on $W/W_1$. Continuing in this way shows whether $N/Z$ acts trivially on $Z$ and in this case $Z=\ZB(N)$ and $N\le\E(G)$.
\qedhere
\end{enumerate}
\end{proof}

\section{Theorem A and Almost Simple Groups}

In this section we provide the tools for the proof of Theorem~A.

As is customary, we adopt the notation 
\[\GL^\epsilon(k,q^f):=\begin{cases}
\GL(k,q^f)&\text{if }\epsilon=1,\\
\GU(k,q^f)\le\GL(k,q^{2f})&\text{if }\epsilon=-1
\end{cases}\]
and similarly, $\SL^\epsilon(k,q^f)$, $\PSL^\epsilon(k,q^f)$ (here $\GU$ stands for the general unitary group).

\begin{lem}\label{lemPSL}
Let $p\ne q$ be primes such that $p>2$. Let $S:=\PSL^\epsilon(k,q^f)$ where $p\mid\gcd(k,f,q^f-\epsilon)$. 
Let $S\le G\le\Aut(S)$ such that $|G:S|_p=p$. Let $P$ be a Sylow $p$-subgroup of $G$. Then $|P:P'|=p^2$ if and only if $k=p$ and
there exists a $q$-element $s\in S$ such that $|\CB_G(s)|$ is not divisible by $p$.
\end{lem}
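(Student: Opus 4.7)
The plan is to apply \autoref{lemhupp} to $N := P \cap S$, which has index $p$ in $P$ since $|G:S|_p = p$, reducing the question to computing $|\CB_{N/N'}(x)|$ for any $x \in P \setminus N$. I would first set up $N = A \rtimes W_p$, where $A$ is the image in $S$ of the Sylow $p$-subgroup of a maximally $p$-split torus in $\SL^\epsilon(k, q^f)$ modulo the $p$-part $Z$ of the centre, and $W_p$ is a Sylow $p$-subgroup of the Weyl group $S_k$ realised by $0/1$ permutation matrices, so that $W_p$ is pointwise fixed by every field automorphism.

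For the direction ``$|P:P'| = p^2 \Rightarrow k = p$'', I would show that if $k > p$ then $|W_p/W_p'| \ge p^2$: for $k = pm$ with $\gcd(m,p) = 1$, $W_p$ is elementary abelian of rank $m \ge 2$; for $p \mid m$, $W_p$ has a $C_p \wr C_p$-section whose abelianisation is $C_p^2$. Since field automorphisms fix $W_p$ pointwise, and for a diagonal automorphism $\delta$ the defect $h_w \in A$ defined by $\delta w \delta^{-1} = h_w w$ lies in $[A, W_p]$ (because the torus root corresponding to a full $p$-cycle of $w$ is in the image of $1 - w$), any $x \in P\setminus N$ acts trivially on the image of $W_p$ in $N/N'$; therefore $|\CB_{N/N'}(x)| \ge |W_p/W_p'| \ge p^2$ and $|P:P'| = p \cdot |\CB_{N/N'}(x)| \ge p^3$. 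The RHS of the lemma also forces $k = p$, so both sides fail for $k > p$.

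For $k = p$, $W_p = \langle w \rangle$ is cyclic generated by a Coxeter $p$-cycle. Parameterising the $\lambda \in T_p \cap \SL^\epsilon$ with $w(\lambda) \in \lambda Z$ as $(\lambda_1, \lambda_1 z^{-1}, \dots, \lambda_1 z^{-(p-1)})$ with $\lambda_1, z \in \mu_p$ gives $|A^w| = p$, hence $|A/[A, w]| = p$, and $N/N' \cong A/[A, w] \oplus \langle w \rangle$ has order $p^2$. A field Frobenius $\phi$ acts on $A/[A, w] \cong C_p$ by multiplication by $q^{f/p} \bmod p$; since the order $d$ of $q$ modulo $p$ is coprime to $p$ while $d \mid f$ and $p \mid f$, one has $dp \mid f$, so $p \mid q^{f/p} - \epsilon$ and $\phi$ acts trivially on $A/[A, w]$; $\phi$ also fixes $w$, hence it acts trivially on $N/N'$. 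A non-inner diagonal $\delta$ centralises $A$ but shifts $w \mapsto h w$ with $h \in A$ whose image in $A/[A, w]$ is non-trivial (a direct computation with $\delta = \diag(\xi, 1, \dots, 1)$ yields $h = \diag(\xi, \xi^{-1}, 1, \dots, 1)$, which is not a $w$-commutator). Consequently $|\CB_{N/N'}(x)| = p$ if and only if $x$ has non-trivial diagonal component modulo $S$.

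To translate this into the centraliser condition on $q$-elements, observe that a $q$-element $s$ with $p \nmid |\CB_G(s)|$ must be regular unipotent: for non-regular unipotents in $\PSL^\epsilon(p, q^f)$ the number of $\SL^\epsilon$-classes of a given Jordan type $\lambda \ne (p)$ is $\gcd(\lambda_1, \dots, \lambda_r, q^f - \epsilon)$, which is coprime to $p$ since each $\lambda_i \le p-1$; such classes are therefore fixed by the $p$-part of $G/S$, giving $|\CB_G(s)|_p \ge p$. For regular unipotent $s$, $|\CB_S(s)|_p = 1$, so $|\CB_G(s)|_p = |\Stab_{G/S}(s^S)|_p$; and $\PGL^\epsilon/\PSL^\epsilon \cong C_p$ permutes the $p$ regular unipotent $\SL^\epsilon$-classes transitively (via the product-of-superdiagonals invariant in $\FF^\times/(\FF^\times)^p$), while field automorphisms fix them by the same $p \mid q^{f/p} - \epsilon$ computation. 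Hence such an $s$ exists iff $G/S$ has non-trivial diagonal component, which is equivalent to $|\CB_{N/N'}(x)| = p$ and therefore to $|P:P'| = p^2$. The main obstacles I expect are: verifying $h_w \in [A, W_p]$ uniformly for $k > p$ and every $w \in W_p$; extending the $k = p$ computations to $a = (q^f - \epsilon)_p > p$, where $A$ is no longer elementary abelian (though $|A^w| = p$ persists); and adapting the Frobenius-action formulas to the unitary case $\epsilon = -1$, where the relevant Weyl-twisted torus and field automorphism of $\PSU$ differ from the linear case.
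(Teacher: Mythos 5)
Your proposal follows essentially the same route as the paper's proof: the same decomposition of $N=P\cap S$ as (image of a torus) $\rtimes$ (Sylow $p$-subgroup of $S_k$), the reduction via \autoref{lemhupp} to computing $|\CB_{N/N'}(x)|$, the computation showing that field automorphisms act trivially on $N/N'$ while a nontrivial diagonal component gives centralizer of order exactly $p$, and the final translation to the $p$ regular unipotent classes being permuted transitively by diagonal automorphisms and fixed by field automorphisms. Your handling of $k>p$ (working directly in $N/N'$ and showing $x$ centralizes the image of $W_p$, which has order at least $p^2$) is a legitimate variant of the paper's slicker step, which instead observes $P/\overline{D}\cong L\times C_p$ and invokes \cite[Satz~III.15.3]{H} to force $L\cong C_p$. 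Your worries about the non-elementary-abelian torus and the unitary case are unfounded: the fixed-point count $|A^w|=p$ goes through verbatim for $Y$ cyclic of order $p^r$, and the paper handles $\epsilon=-1$ by working in $\FF_{q^{2f}}$ throughout and citing Wall for the conjugacy of unipotent elements in $\SU$ versus $\GU$.

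The one step where your argument as written does not hold up is the elimination of non-regular $q$-elements. From ``the number of $\SL^\epsilon$-classes of type $\lambda\ne(p)$ is coprime to $p$'' you infer that every such class is fixed by the $p$-part of $G/S$; but a $p$-element of $G/S$ acting on a set of size coprime to $p$ is only guaranteed \emph{some} fixed point, not that the class of your particular $s$ is fixed (a mixed diagonal-field element acts on the set of classes by an affine map that need not be a translation). The inference is rescued here only because the count is actually $1$: for a partition $\lambda$ of the prime $p$ with $\lambda\ne(p)$ one has $\gcd(\lambda_1,\dots,\lambda_r)=1$, so there is a unique $\SL^\epsilon$-class of that type and it is trivially $G$-stable, whence $p=|G:S|_p$ divides $|\CB_G(s)|$. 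The paper avoids this entirely by a cleaner local argument: for any $s$-invariant decomposition $V=U\oplus W$ with $s=s_U\oplus s_W$, the element $\zeta^{\dim W}s_U\oplus\zeta^{-\dim U}s_W$ lies in $\CB_S(s)$ and has order divisible by $p$, so $\CB_S(s)$ itself is already $p$-singular for every non-regular $s$. You should either switch to that argument or replace ``coprime to $p$'' by the exact statement that the class count is $1$.
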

\begin{proof}
We start by constructing a Sylow $p$-subgroup $Q$ of $S$ following \cite{Weir}. Note first that
\[|S|=\frac{q^{fk(k-1)/2}}{\gcd(k,q^f-\epsilon)}\prod_{i=2}^k(q^{fi}-\epsilon^i).\]
Let $p^r$ be the largest $p$-power dividing $q^f-\epsilon$. Since $q^{f/p}\equiv q^f\equiv \epsilon\pmod{p}$, we have $r\ge 2$. 
Note that the $p$-part of $q^{fi}-\epsilon^i$ depends only on $r$ and $i$. 
If $\epsilon=1$, let $Y\le\FF_{q^f}^\times$ be of order $p^r$. 
If $\epsilon=-1$, take $Y\le\FF_{q^{2f}}^\times$ of order $p^r$.
Let $Y_1\le Y$ be of order $\gcd(k,p^r)$.
The diagonal matrices in $\SL^\epsilon(k,q^f)$ with entries in $Y$ can be realized by $k$-tuples
\[D:=\{(y_1,\ldots,y_k)\in Y^k:y_1\ldots y_k=1\}.\]
Modulo scalars, we obtain 
\[\overline{D}:=D/\langle (y,\ldots,y):y\in Y_1\rangle.\]
We denote the elements in $\overline{D}$ by $\overline{(y_1,\ldots,y_k)}$ as usual. Finally, let $L$ be a Sylow $p$-subgroup of the symmetric group $S_k$. Then $L$ acts on $\overline{D}$ by permuting the coordinates and $Q\cong \overline{D}\rtimes L$ (it can be checked that $Q$ has indeed the correct order). Recall that $L$ is a direct product of iterated wreath products corresponding to the $p$-adic expansion of $k$. 

By the Atlas~\cite[Table~5]{Atlas}, we have 
\[G/S\le\Out(S)\cong \begin{cases}
C_d\rtimes (C_f\times C_2)&\text{if }\epsilon=1,\\
C_d\rtimes C_{2f}&\text{if }\epsilon=-1
\end{cases}\]
where $d=\gcd(k,q^f-\epsilon)$ denotes the order of the diagonal automorphism group, $C_f$ or $C_{2f}$ stands for the field automorphism group and the graph automorphism group $C_2$ acts by inversion on $C_d$ (if $\epsilon=1$).
Let $\gamma\in\Out(S)$ be a diagonal automorphism induced by $\diag(\zeta,1,\ldots,1)\in\GL^\epsilon(k,q^f)$ for some $\zeta\in Y$ such that $\gamma^p$ is an inner automorphism of $S$. Let $\delta$ be the field automorphism $\lambda\mapsto\lambda^{q^{f/p}}$ for $\lambda\in\FF_{q^f}$ (respectively $\lambda\mapsto\lambda^{q^{2f/p}}$ for $\lambda\in\FF_{q^{2f}}$ if $\epsilon=-1$).
Then $P$ induces an automorphism $\alpha=\gamma^i\delta^j$ on $Q$ with $0\le i,j\le p-1$. Observe that $\alpha$ normalizes $\overline{D}$ and acts trivially on $Q/\overline{D}\cong L$. Moreover $\gamma^p$ corresponds to some element of $\overline{D}$. Hence, $P/\overline{D}\cong L\times C_p$. Now $|P:P'|=p^2$ forces $|L|=p$ by \cite[Satz~III.15.3]{H}, i.\,e. $k=p=|Y_1|$.
So we can assume that $Y=\langle\zeta\rangle$ and $\gamma^p$ is identified with $\overline{(\zeta^{p-1},\zeta^{-1},\ldots,\zeta^{-1})}\in\overline{D}$.
Since $D$ is generated by the elements $(\zeta,\zeta^{-1},1,\ldots,1)$, $(1,\zeta,\zeta^{-1},1,\ldots,1),\ldots,(1,\ldots,1,\zeta,\zeta^{-1})$, we have
\[Q'=\langle\overline{(\zeta,\zeta^{-2},\zeta,1,\ldots,1)},\ldots\rangle.\]
Let $t:=(\zeta,\zeta^{-1},1,\ldots,1)\in D$. We compute
\begin{align*}
\overline{t}^p&=\overline{(\zeta,\zeta^{-2},\zeta,1,\ldots,1)}^{p-1}\overline{(1,\zeta,\zeta^{-2},\zeta,1,\ldots,1)}^{p-2}\ldots\\
&\qquad\ldots\overline{(1,\ldots,1,\zeta,\zeta^{2},\zeta)}^2\overline{(\zeta,1\ldots,1,\zeta,\zeta^{-2})}\in Q'
\end{align*}
and deduce
\[Q/Q'=L\times\langle \overline{t}\rangle Q'/Q'\cong C_p\times C_p.\]
By \autoref{lemhupp}, $|P:P'|=p^2$ if and only if $|\CB_{Q/Q'}(\alpha)|=p$. 
Setting $\sigma=(1,\ldots,p)\in L$, we have 
\[\gamma(\sigma)=\sigma\overline{(\zeta^{-1},1\ldots,1,\zeta)}\equiv\sigma\overline{t}\pmod{Q'}.\]
This yields $\CB_{Q/Q'}(\gamma)=\langle \overline{t}\rangle Q'/Q'$. On the other hand, $\delta(\sigma)=\sigma$ and
$\delta(\overline{t})\equiv\overline{t}\pmod{Q'}$ since $q^{f/p}\equiv \epsilon\pmod{p}$. 
We conclude that $\CB_{Q/Q'}(\alpha)=\CB_{Q/Q'}(\gamma^i)$. 
Therefore, $|\CB_{Q/Q'}(\alpha)|=p$ if and only if $i\ne 0$, i.\,e. $P/Q=\langle\alpha\rangle$ does not induce a field automorphism. 

We now translate this condition into some character table property.
By \autoref{Kimmerle}, the character table of $G$ determines the isomorphism type of $S$ and in turn also $k$ and $q^f$ (there are no exceptional isomorphisms for the given parameters). 
We aim to count $q$-elements $s\in S$ such that $|\CB_G(s)|$ is not divisible by $p$. 
Recall that every unitriangular matrix in $\SL(p,q^f)$ is similar to a matrix in $\SU(p,q^f)$ and two matrices in $\SU(p,q^f)$ are similar if and only if they are conjugate in $\GU(p,q^f)$ (see \cite[p. 34, Case (A)]{WallUnitary}). We can therefore think of $s$ as a unitriangular matrix.
Let $V$ be the $p$-dimensional vector space over $\FF_{q^f}$ ($\FF_{q^{2f}}$ if $\epsilon=-1$) corresponding to $S$. Suppose that there is a non-trivial $s$-invariant decomposition $V=U\oplus W$ and $s=s_U\oplus s_W$ such that $s_U$ acts on $U$ and $s_W$ acts on $W$. Let $d_U:=\dim U$ and $d_W:=\dim W$. 
Then $\zeta^{d_W} s_U\oplus \zeta^{-d_U}s_W\in\CB_S(s)$ has order divisible by $p$. Now assume that $s$ acts indecomposably on $V$. 
Then $s$ is similar to a single unitriangular Jordan block $s'$. 
Recall that the centralizer of $s'$ in $\GL(p,q^f)$ consists of unitriangular matrices. It follows that $\CB_S(s)$ is a $q$-group. 
Since the elements in $\CB_{\GL^\epsilon(p,q^f)}(s)$ have only one eigenvalue (with multiplicity $p$), it follows that 
\[\frac{|\GL^\epsilon(p,q^f):\CB_{\GL^\epsilon(p,q^f)}(s)|}{|\SL^\epsilon(p,q^f):\CB_{\SL^\epsilon(p,q^f)}(s)|}=
|\GL^\epsilon(p,q^f):\CB_{\GL^\epsilon(p,q^f)}(s)\SL^\epsilon(p,q^f)|=p.\]
Hence, $S$ has precisely $p$ conjugacy classes of such elements and they are represented by $\gamma^i(s)$ where $i=0,\ldots,p-1$. 
We may choose $s\in\PSL^\epsilon(p,q)$.
If $\alpha$ is a field automorphism, it commutes with $\gamma$ and fixes the $S$-conjugacy class of each $\gamma^i(s)$. 
Hence in this case, for every $q$-element $s\in S$, $|\CB_G(s)|$ is divisible by $p$. 
If, on the other hand, $\alpha$ is not a field automorphism, the elements $\gamma_i(s)$ are fused in $G$. In particular there is a $q$-element $s\in S$ (unique up to conjugation) such that $|\CB_G(s)|$ is not divisible by $p$.
\end{proof}

The two cases in \autoref{lemPSL} arise for example when $p=3$, $S\cong\PSU(3,8)$ and $|G/S|=3$. 
The relevant groups can be constructed by $\mathtt{PrimitiveGroup}(513,a)$ with $a=3,4,5$ in \cite{GAP48}. The character tables have the names \verb#U3(8).3_1#, \verb#U3(8).3_2# and \verb#U3(8).3_3# respectively.
Here $|P:P'|=9$ if and only if $a\in\{4,5\}$.

The following proof was kindly provided to us by Gunter Malle.

\begin{lem}\label{lemDE}
Let $G$ be an almost simple group with socle $S\in\{D_4(q),E_6(q),{^2E_6(q)}\}$ and $|G/S|_3=3$. 
Let $P\in\Syl_3(G)$. Then $|P:P'|>9$ or the isomorphism type of $P$ can be deduced from the character table of $G$.
\end{lem}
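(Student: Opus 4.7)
The plan is to case-split on the type of outer $3$-element in $G/S$. By \autoref{Kimmerle}, the character table of $G$ already determines $S$, the defining prime power $q$, and the image of $G$ in $\Out(S)$; since $|G/S|_3=3$, the generating $3$-element of $G/S$ is, up to inner and $3'$-outer automorphisms, one of: (a) a diagonal automorphism (only for $S\in\{E_6(q),{^2E_6(q)}\}$ with $3\mid q-\epsilon$); (b) a field automorphism (only if $3\mid f$ where $q=p^f$); (c) a triality automorphism (only for $S=D_4(q)$).

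First I would record the structure of $Q:=P\cap S$ as a Sylow $3$-subgroup of $S$ together with the induced action of the chosen outer generator, following the generic description of Sylow subgroups of Chevalley groups (either through a Coxeter-type maximal torus when $3\nmid q$, or through the unipotent root subgroups when $p=3$). In each of the three socles this gives $3$-rank of $Q/Q'$ at least three, coming from three independent rank-one contributions that are not fused by any single outer $3$-element. From this I would bound $|P/P'|$ below in cases (a) and (b) directly: a diagonal or field automorphism of order three acts semisimply on the abelianized torus and fixes at least two generators modulo $Q'$, so $P/P'$ retains rank $\ge 3$ and hence $|P:P'|>9$.

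For case (c), the triality permutes three end-node root-data of $D_4$ cyclically, and outside a short list of small-$q$ exceptions the corresponding three independent rank-one contributions to $Q/Q'$ are all nonzero, yielding $|P:P'|>9$ by \autoref{lemhupp} again. In the remaining small-$q$ configurations, $|P:P'|=9$ forces $P$ to be a $3$-group of maximal nilpotency class by \autoref{TK}, with $|P|$ fixed by $|G|_3$; in this finite exceptional list the character table supplies enough invariants (order of $P$, exponents of $3$-elements by Higman, centralizer orders of individual $3$-elements via \autoref{Kimmerle}, and the action on the chief factors inside $S\langle\tau\rangle$) to pin down the isomorphism type of $P$ among the finitely many maximal class $3$-groups of the correct order, typically by matching against the explicit Sylow structure of the triality fixed-point subgroups $G_2(q)$ or ${^3D_4(q^{1/3})}$.

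The main obstacle will be case (c) in defining characteristic $p=3$, where $Q$ is the full unipotent group of $D_4(q)$ with its intricate commutator structure and the triality orbits on root subgroups must be tracked explicitly through the BN-pair description rather than by the transparent torus argument of (a) and (b). Verifying the rank-$\ge 3$ claim there, together with enumerating the finitely many exceptional small-$q$ configurations in which generic estimates break down and character-table invariants must be invoked by hand, will account for the bulk of the technical work.
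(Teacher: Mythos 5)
Your plan diverges from the paper's in a way that exposes two genuine gaps. The first and most serious is the opening claim that \autoref{Kimmerle} hands you ``the image of $G$ in $\Out(S)$.'' It does not: that theorem recovers composition factors, chief factors and minimal normal subgroups, so it gives you $S$ and $|G/S|$, but the three non-conjugate subgroups of order $3$ in $\Out(S)$ (diagonal/graph, field, and their product) yield three extensions $S.3$ with identical composition data, and deciding which one $G$ is \emph{is} the content of the lemma. This is exactly the phenomenon illustrated right after \autoref{lemPSL} with $\PSU(3,8).3_1$, $.3_2$, $.3_3$, where $|P:P'|$ genuinely differs between the extensions. The paper's proof spends most of its effort on this step: for $E_6^\epsilon(q)$ it isolates the field-automorphism case by the presence of characters of degree $bq^9\Phi_3^2\Phi_5\Phi_6\Phi_8\Phi_9\Phi_{12}/3$ with $b$ prime to $3$ (three characters attached to a disconnected centralizer $D_4(q).(q-1)^2.3$, fixed by field but permuted by diagonal automorphisms), and for $D_4(q)$ it uses the three unipotent characters of degree $q^6\Phi_3\Phi_6$ in the same role. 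You could only dispense with this identification if $|P:P'|>9$ held in \emph{every} case, but your own plan concedes residual configurations (the ``small-$q$ exceptions'' under triality, and the field cases, where the paper does not claim $|P:P'|>9$ but only that the isomorphism type of $P$ is then determined) in which you must know which automorphism is induced --- and you have no mechanism to extract that from the character table.

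The second gap is in the rank bounds themselves. The claim that the three rank-one contributions to $Q/Q'$ ``are not fused by any single outer $3$-element'' is false for triality: in defining characteristic the graph automorphism stabilizes one simple-root subgroup and cyclically permutes the other three (as the paper notes), so it does fuse them; the bound $|P:P'|>9$ there comes from each root subgroup having order $q=3^f\ge 27$, not from unfused rank-one pieces. In cross characteristic the paper avoids any generic-versus-small-$q$ dichotomy by locating a Sylow $3$-subgroup of $S.\langle\text{graph}\rangle$ inside an $A_2(q)^2$ subsystem subgroup of $F_4(q)$ (and, for adjoint $E_6$, inside a centralizer of type $A_2(q)^3.3$), which gives $|P:P'|\ge 27$ uniformly in $q$. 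Your fallback for the exceptional list --- matching character-table invariants against the Sylow structure of $G_2(q)$ or ${}^3D_4(q^{1/3})$, which are fixed-point subgroups and not of the right $3$-order to contain a Sylow $3$-subgroup of $S.3$ --- is not an argument and would not close these cases. As written, the proposal neither establishes $|P:P'|>9$ in all cases nor provides a character-theoretic test separating the cases where it fails.
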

\begin{proof}
Let $q=p^f$ be a prime power. 
Suppose first that $S=E_6^\epsilon(q)$ (where $E_6^+(q)=E_6(q)$ and $E_6^-(q)={}^2E_6(q)$). If $3\nmid q-\epsilon$, then $P$ induces a field automorphism on $S$ and if $3\nmid f$, then $P$ induces a diagonal automorphism. This determines the isomorphism type of $P$.
Hence, we may assume that $3\mid (q-\epsilon,f)$.
Now $\Out(S)$ has Sylow 3-subgroups isomorphic to $C_3\times C_m$ where $m$ is
the 3-part of $f$. In $\Out(S)$ the graph automorphism of order~2 inverts the
diagonal automorphism and centralizes the field automorphism, so there are
three $\Out(S)$-conjugacy classes of subgroups of order~3 in $\Out(S)$,
generated by a diagonal automorphism, by a field automorphism and by their
product, respectively. Correspondingly, there are three possible candidates for
a Sylow $3$-subgroup $P$ of $G$.

Assume first that $\eps=1$. Since the adjoint
group of type $E_6$ has a 3-element $s$ with disconnected centralizer of type
$D_4(q).(q-1)^2.3$ lying in the derived subgroup, $S$ has three irreducible
characters of degree $q^9\Phi_3^2\Phi_5\Phi_6\Phi_8\Phi_9\Phi_{12}/3$,
corresponding to the Steinberg character of $D_4(q)$ under Jordan decomposition.
According to Lusztig's parametrization these are the only irreducible characters
of $S$ of this degree. The field automorphisms of $S$ leave these characters
invariant, while the diagonal automorphism of order~3 permutes them
transitively. Thus, the existence of characters of degree
$bq^9\Phi_3^2\Phi_5\Phi_6\Phi_8\Phi_9\Phi_{12}/3$, with $b$ dividing
$|\Out(S)|$ and prime to~3, allows one to
identify the case when $G$ induces a field automorphism. If $G$ induces the
diagonal automorphism of order~3, it contains the adjoint group of type $E_6$.
This also contains an element of order 3 with centralizer of type $A_2(q)^3.3$,
which in turn contains a Sylow 3-subgroup $P$ of $G$. From this it is easy to
check that $|P:P'|\ge 27$. This completes the proof for the case $\eps=1$. When
$\eps=-1$ an entirely similar argument applies.

Now assume $S=D_4(p^f)$. Again, Sylow 3-subgroups of $\Out(S)$ are isomorphic to
$C_3\times C_m$ where $m$ is the 3-part of $f$. Clearly we may assume $m>1$.
Again, there are three $\Out(S)$-conjugacy classes of subgroups of order~3 in
$\Out(S)$, generated by a graph automorphism, by a field automorphism and by
their product, respectively. If $q=3^f$ is a 3-power, then a Sylow $3$-subgroup
of $S$ has commutator factor group generated by the images of the root subgroups
of order~$q$ for the four simple roots. The field automorphism stabilizes each
root subgroup, while the graph automorphism stabilizes one and interchanges the
other three. Hence in either case, $|P:P'|>9$.

Finally assume that $q$ is not a 3-power. By Lusztig's Jordan decomposition of
characters, the only irreducible characters $\chi$ of $S$ with $\chi(1)_p=q^6$
and $\chi(1)$ not divisible by $\Phi_4^2$ are three unipotent characters of
degree $q^6\Phi_3\Phi_6$.
These are fixed by the field automorphisms but permuted transitively by
the graph and the graph-field automorphism. Thus the existence of irreducible
characters of degree $bq^6\Phi_3\Phi_6$ with $b$ dividing $|\Out(S)|$ and prime
to~3 allows one to identify the case when $G$ induces a field automorphism.
The extension
$H$ of $S$ by the graph automorphism occurs as a subgroup of $F_4(q)$. The latter
group contains a subsystem subgroup of type $A_2(q)^2$ which in turn contains
a Sylow 3-subgroup $P$ of $F_4(q)$. The latter clearly has $|P:P'|>9$. Comparing
orders, it is also a Sylow 3-subgroup of $H$. 
\end{proof}

\section{Proof of Theorem A}

The following slightly extends a theorem of Berkovich (see \cite[Theorem 7.7]{N}).

 \begin{thm}\label{kernels}
 Let $G$ be a finite group, $p$ a prime and $P\in \syl pG$. 
 Let $$K=\bigcap_{\chi \in \irrp G} \ker\chi\, .$$
 Then $K={\rm core}_G(NP')$, where $N$ is the largest normal subgroup of $G$ such that $\cent NP=1$. 
 \end{thm}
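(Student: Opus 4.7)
The plan is to prove $K\subseteq{\rm core}_G(NP')$ and ${\rm core}_G(NP')\subseteq K$ separately, invoking McKay's conjecture (now a theorem) for the reverse inclusion.

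For $K\subseteq{\rm core}_G(NP')$, I would first show $N\le K$. Given $\chi\in\irrp G$ and an irreducible constituent $\theta$ of $\chi|_N$, Clifford theory forces the orbit size $|G:I_G(\theta)|$ to divide $\chi(1)$, hence to be coprime to $p$, so after replacing $\theta$ by a $G$-conjugate I may assume $P\le I_G(\theta)$. Glauberman's lemma applied to the coprime action of $P$ on $N$ with $\cent N P=1$ then forces $\theta=1_N$, so $N\le\ker\chi$. Next, $K\cap P\le P'$ follows from the standard induction trick: if $h\in K\cap P$ lay outside $P'$, a linear character $\lambda$ of $P$ with $\lambda(h)\ne1$ would give an induced character $\lambda^G$ of $p'$-degree $|G:P|$, whose degree sum forces an irreducible constituent $\chi\in\irrp G$ in which $\lambda$ appears upon restriction to $P$; then $\chi(h)\ne\chi(1)$ would contradict $h\in K$.

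From $K\trianglelefteq G$ with $K\cap P\le P'$, Tate's theorem (contained in \autoref{TK}) yields a normal Hall $p'$-subgroup $K_{p'}$ of $K$, characteristic in $K$ and hence normal in $G$. The main technical step is to show $K_{p'}\le N$, equivalently $\cent{K_{p'}} P=1$. If that failed, Glauberman's character correspondence would produce a non-trivial $P$-invariant $\theta\in\irr{K_{p'}}$; coprime extension lifts $\theta$ to $\tilde\theta\in\irr{K_{p'}P}$ since $(|K_{p'}|,|P|)=1$. The induced character $\tilde\theta^G$ has $p'$-degree $|G:K_{p'}P|\,\theta(1)$, so a degree-sum argument provides an irreducible constituent $\chi\in\irrp G$; by Frobenius reciprocity the non-trivial $\theta$ appears in $\chi|_{K_{p'}}$, forcing $K_{p'}\not\le\ker\chi$ and contradicting $K_{p'}\le K$. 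Combining $K_{p'}\le N$ with $K\cap P\le P'$ gives $K\le NP'$, and the normality of $K$ in $G$ upgrades this to $K\le{\rm core}_G(NP')$.

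For the reverse inclusion, set $L={\rm core}_G(NP')$ and invoke McKay for $G$ and $G/L$. The containment $L\le K$ is equivalent to $|\irrp{G/L}|=|\irrp G|$, which via McKay on both sides reduces to $|\irrp{\norm GP/(L\cap\norm GP)}|=|\irrp{\norm GP}|$. A short coprime commutator computation yields $N\cap\norm GP=1$: for $g$ in this intersection and any $p\in P$, $[g,p]\in N\cap P=1$, so $g\in\cent N P=1$. Combined with the Schur--Zassenhaus decomposition $L=(L\cap N)\rtimes(L\cap P)$, this collapses $L\cap\norm GP$ to the $p$-part $L\cap P\le P'$. Since $P$ is a normal Sylow subgroup of $\norm GP$, every $\chi\in\irrp{\norm GP}$ has $\chi|_P$ a sum of linear characters and hence contains $P'$ in its kernel, so $L\cap\norm GP\le\ker\chi$ and the required equality of character counts follows. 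The main obstacle is the step $K_{p'}\le N$: producing a $p'$-degree irreducible constituent of $\tilde\theta^G$ with non-trivial restriction to $K_{p'}$ via coprime extension and the degree-sum argument is the key, while everything else is classical Clifford theory combined with the clean identification $N\cap\norm GP=1$ that drives the McKay reduction.
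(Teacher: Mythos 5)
Your proof of the containment $K\le {\rm core}_G(NP')$ is correct and follows essentially the paper's route: the Glauberman correspondence (applied to a $P$-invariant constituent of $\chi_N$, which exists because the orbit length divides the $p'$-number $\chi(1)$) gives $N\le K$; inducing linear characters of $P$ gives $K\cap P\le P'$; and the extension--induction argument for a nontrivial $P$-invariant character of the normal $p$-complement of $K$ shows that this $p$-complement lies in $N$. Your only real deviation here is obtaining the normal $p$-complement of $K$ from $K\cap P\le P'$ via Tate's theorem rather than quoting Berkovich's theorem \cite[Theorem~7.7]{N} directly; both are legitimate, and your version is arguably more self-contained. Where you genuinely diverge is the reverse inclusion ${\rm core}_G(NP')\le K$, which you deduce from the McKay conjecture applied to $G$ and $G/L$, after the (correct) computations $N\cap\norm GP=1$ and $L\cap\norm GP=L\cap P\le P'$. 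This is logically sound now that McKay is a theorem, but it is wildly disproportionate: the containment is an immediate consequence of Clifford's theorem and needs no deep input. Indeed, once $N\le\ker\chi$ is known, view $\chi\in\irrp G$ as a character of $G/N$; its restriction to the Sylow subgroup $PN/N$ has a linear constituent, so $1$ appears in the restriction of $\chi$ to $(PN/N)'=P'N/N$ and hence to the normal subgroup ${\rm core}_G(NP')/N$, whence Clifford's theorem forces ${\rm core}_G(NP')\le\ker\chi$. This is how the paper argues, and it keeps the lemma elementary (and available in 2022, when McKay was still open for odd primes); you should replace the McKay step by this two-line argument.
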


\begin{proof}
If $M \nor G$, then notice that $\cent MP=1$ implies that $M$ is a $p'$-group.
Indeed, if $p$ divides $|M|$, then $1<Q=P\cap M \nor P$ and by elementary group
theory, $Q\cap \zent P>1$. Now, if $L,M \nor G$, and $\cent L{P}=1=\cent MP$, then $\cent{LM}P=\cent LP\cent MP=1$, by coprime action.
Therefore, there is a largest normal subgroup $N$ of $G$ such that $\cent NP=1$.
Of course, $N$ does not depend on $P$, since $\cent N{P^g}=\cent NP^g=1$ for $g \in G$.
Also, $N$ is characteristic in  $G$. If we denote $X(G):=N$, notice that $X(G/N)=1$,
by coprime action.

Let $\chi \in \irrp G$. Let $\theta \in \irr N$ be $P$-invariant under $\chi$. Since $\cent NP=1$, we have that $\theta=1_N$ by the Glauberman correspondence. Thus $N \sbs K$.
Notice that it is no loss to assume that $N=1$. 
We want to prove that $K={\rm core}_G(P')$.

By Theorem 7.7 of \cite{N}, we have that $K$ has a normal $p$-complement $R$.
Suppose that $\gamma \in \irr R$ is $P$-invariant. Then $\gamma$ has an extension $\theta \in \irr{RP}$. Since $\theta$ has $p'$-degree and $|G:RP|$ is not divisible by $p$,
then $\theta^G$ contains an irreducible character $\chi$ of $p'$-degree. Then $R \sbs \ker\chi$, and therefore $\gamma=1_R$. Therefore $\cent RP=1$, and thus $R\sbs N=1$.
We have then that $K$ is a $p$-group.  If $\chi \in \irrp G$, then $\chi_P$ has a
linear constituent $\lambda \in \irr P$. Then $\chi_{P'}$ contains $1_{P'}$, and thus ${\rm core}_G(P') \sbs \ker\chi$.  Finally, if $\lambda \in \irr P$ is linear, then $\lambda^G$
contains a $p'$-degree $\chi \in \irr G$. Thus $\chi_P=\lambda + \Delta$ for some character $\Delta$ of $P$ or $\Delta=0$.
Then ${\ker \chi} \cap P \sbs \ker\lambda$. Hence $K\cap P \sbs \ker \lambda$ for all
$\lambda$. Therefore $K=K\cap P \sbs P'$ and the theorem follows.
\end{proof}

Assuming $\oh{p'} G=1$, the proof of \autoref{kernels} shows that $K={\rm core}_G(P')$. 
Hence, the condition $\oh{p'} G=1={\rm core}_G(P')$
can be read off from the character table. Thus, in order to prove Theorem~A we may assume that $\oh{p'} G=1={\rm core}_G(P')$.
Moreover, by \autoref{notalmsimp}, we may assume that $G$ is almost simple. Now Theorem~A follows from the next result.

\begin{thm}
Let $G$ be an almost simple group with Sylow $p$-subgroup $P$. Then the character table of $G$ determines whether $|P:P'|=p^2$.
\end{thm}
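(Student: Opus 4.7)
The plan is a case analysis on the socle $S=\soc(G)$, which by \autoref{Kimmerle}(iv) is determined by the character table, as then is $|G:S|=|G|/|S|$ and its $p$-part. When $p\nmid |G:S|$, $P\in\Syl_p(S)$, so $|P:P'|$ is an invariant of $S$ alone. Assume from now on that $p\mid |G:S|$.

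For $S$ alternating one has $G\in\{A_n,S_n\}$ (with a few extra possibilities when $n=6$), so $p=2$ and $P$ is a Sylow $2$-subgroup of $G$; its structure as an iterated wreath product is determined by the $2$-adic expansion of $n$, and $|P:P'|$ is then explicit. For $S$ sporadic or the Tits group, only finitely many extensions arise and are checked individually against the Atlas. For $S$ of Lie type in non-defining characteristic, the case $S=\PSL^\epsilon(k,q^f)$ is covered by \autoref{lemPSL}, and the other classical families $\PSp_{2n}(q)$ and $\POmega^\pm(k,q)$ should be handled by an entirely analogous Weir-type construction: write $P=Q\rtimes\langle\alpha\rangle$ with $Q\in\Syl_p(S)$, compute $\CB_{Q/Q'}(\alpha)$ via \autoref{lemhupp}, and translate the resulting dichotomy (field automorphism versus diagonal or graph automorphism) into the existence of a semisimple class in $S$ whose centralizer in $G$ has order coprime to $p$, a property read off from the character table. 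For the exceptional Lie-type socles, the delicate cases $D_4(q),E_6(q),{}^2E_6(q)$ at $p=3$ are \autoref{lemDE}, and for the remaining exceptional socles the $p$-part of $\Out(S)$ is generated by field automorphisms, so the Lusztig--Jordan-decomposition argument of \autoref{lemDE} again separates the possible Sylow structures.

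In defining characteristic $p$, $Q\in\Syl_p(S)$ is the unipotent radical of a Borel subgroup, and $P/Q$ embeds into the $p$-part of $\Out(S)$, which is generated by field, graph, and graph-field automorphisms; one shows that $|P:P'|$ depends only on $S$ and on the $\Out(S)$-class of $P/Q$, and detects this class from the character table via Galois-orbit lengths of unipotent characters, in the spirit of the proof of \autoref{lemDE}. The main obstacle is the non-defining classical case: for each of $\PSp_{2n}(q)$ and $\POmega^\pm(k,q)$ one must redo the Weir construction, carefully control the action on $Q/Q'$ of all admissible outer automorphisms of $p$-power order, and verify that whenever two such actions give different commutator quotients the character table always contains a distinguishing semisimple class --- all while handling the small-rank exceptional isomorphisms and the prime $p=2$ (with its idiosyncratic Sylow-$2$ behaviour in orthogonal groups) as separate subcases.
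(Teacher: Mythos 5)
Your skeleton (socle read off from the character table via \autoref{Kimmerle}, reduction to $p\mid |G:S|$, then \autoref{lemPSL} and \autoref{lemDE} for the hard families) matches the paper, but the proposal has two genuine gaps. First, you never dispose of $p=2$: you treat it only for alternating socles and defer the Lie-type socles with $2\mid|\Out(S)|$ to your list of ``separate subcases'' without an argument. Since diagonal, field and graph automorphisms of order $2$ occur for essentially every classical socle, this is most of the difficulty at $p=2$; the paper avoids it entirely by quoting \cite{NST}, where the $p=2$ case of Theorem~A was already proved (maximal class Sylow $2$-subgroups are detected by the character table). Second, and more importantly, the ``main obstacle'' you identify --- redoing the Weir construction for $\PSp_{2n}(q)$ and $\POmega^{\pm}(k,q)$, and separately treating the exceptional groups and the defining-characteristic case --- is left unexecuted, and it is also unnecessary. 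The observation that makes the paper's proof short is that for odd $p$ one may reduce to $|PS/S|=p$, and then for every Lie-type socle \emph{except} $\PSL^{\epsilon}(k,q^f)$ with $p\mid\gcd(k,q^f-\epsilon,f)$ and $D_4(q),E_6(q),{}^2E_6(q)$ at $p=3$, the Sylow $p$-subgroup of $\Out(S)$ is cyclic (generated by a field automorphism), so $\Out(S)$ has a unique subgroup of order $p$ up to conjugacy. Hence $P$ is determined up to isomorphism by $S$ and $|G|_p$, both of which are read off from the character table, and no computation of $\CB_{Q/Q'}(\alpha)$ or search for a distinguishing semisimple class is needed. In particular the symplectic and orthogonal families you flag as the crux never require a Weir-type analysis for odd $p$.

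A smaller inaccuracy: your proposed detection mechanism in defining characteristic (``Galois-orbit lengths of unipotent characters'') is not what \autoref{lemDE} uses; that lemma distinguishes the candidate extensions by the existence of irreducible characters of specific degrees (via Lusztig's Jordan decomposition), or by showing directly that $|P:P'|>p^2$ in the ambiguous cases. As written, your argument would need that mechanism to be made precise before it could close even the cases it addresses.
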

\begin{proof}
For $p=2$, the claim was already shown in \cite{NST}. Thus, let $p>2$. By \autoref{Kimmerle}, the character table of $G$ determines the isomorphism type of the simple socle $S$ of $G$. Therefore, we may assume that $P\nsubseteq S$. It follows that $S$ must be a simple group of Lie type.
By the same argument as in the proof of \autoref{structure}, we may assume that $|PS/S|=p$. 
According to the Atlas~\cite[Table~5]{Atlas}, in most cases $G/S\le\Out(S)$ has a unique subgroup of order $p$ up to conjugation. In these cases $P$ is uniquely determined by $S$ and we are done. 
If $S\cong\PSL^\epsilon(k,q^f)$, then we may assume that $p\mid\gcd(k,q-\epsilon,f)$ and the claim follows from \autoref{lemPSL}. The only remaining exceptional cases are settled in \autoref{lemDE}. 
\end{proof}

Let $G=A_n$ be an alternating group with non-abelian Sylow $p$-subgroup $P$ and $|P:P'|=p^2$.
Then either $p=2$, $n\in\{6,7\}$ or $p>2$, $n=a+p^2$ with $0\le a\le p-1$. In either case $P\cong C_p\wr C_p$ has maximal nilpotency class (see \cite[Satz~III.15.3]{H}).
The sporadic groups $G$ such that $|P|\ge p^4$ and $|P:P'|=p^2$ are 
\[(G,p)\in\{(M_{11},2),(J_3,3),(Ly,5),(Co_1,5),(HN,5),(B,5),(M,7)\}\]
(this can be derived from the structure of the Sylow normalizer described in \cite{WilsonAM}). In all cases, except $G=J_3$, $P$ has maximal nilpotency class.

\section{Detecting the Center}

Let $P$ be a $p$-group.
For $Q\le P$ and $N\unlhd P$ we have $|Q:\ZB(Q)|\le|P:\ZB(P)|$ and $|P/N:\ZB(P/N)|\le|P:\ZB(P)|$ by elementary group theory. This often allows inductive arguments in the following.

\begin{lem}\label{lemcenter}
Let $G$ be a finite group with a normal $p$-subgroup $N$ such that $G/N$ has cyclic Sylow $p$-subgroups. Then the character table of $G$ determines whether $N$ is abelian.
\end{lem}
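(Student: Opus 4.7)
My plan is to argue by induction on $|N|$. From the character table we know $|N|$, the lattice of normal subgroups of $G$, and the set of $G$-classes contained in $N$. The base case $|N|=1$ is immediate.

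For $|N|>1$, I would select a minimal normal subgroup $L$ of $G$ with $L\le N$ (available from the lattice); being a minimal normal $p$-subgroup, $L$ is elementary abelian and $G$-irreducible. The character table of $\bar G:=G/L$ is determined by the characters of $G$ containing $L$ in their kernel, and the normal $p$-subgroup $\bar N:=N/L$ of $\bar G$ satisfies $\bar G/\bar N\cong G/N$, which still has cyclic Sylow $p$-subgroup. By the inductive hypothesis, the character table of $\bar G$ determines whether $\bar N$ is abelian; if $\bar N$ is non-abelian, so is $N$.

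Assume now $\bar N$ is abelian, so $N'\le L$. Since $L$ is $G$-irreducible, $N'\in\{1,L\}$. A useful structural observation is that $L\le\zent N$: the $p$-group $N$ acting on the $p$-group $L$ has a non-trivial fixed-point subgroup $\cent L N$, which is $G$-invariant and contained in the $G$-irreducible $L$, hence equals $L$. Thus $N$ has nilpotency class at most $2$, and the remaining task is to distinguish $N'=1$ from $N'=L$.

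A first criterion: since $N'\le G'$, the possibility $N'=L$ forces $L\le G'$, and $G'$ is determined by the character table as the intersection of the kernels of the linear characters of $G$. So if $L\not\le G'$, we immediately conclude that $N$ is abelian. If $L\le G'$, I would apply Clifford theory to the characters of $G$ above non-trivial $\lambda\in\irr L$: in the abelian case each $\lambda$ extends to $|N/L|$ linear characters of $N$, producing many $G$-characters above $\lambda$; in the class-$2$ case no non-trivial $\lambda$ extends to a linear character of $N$, and the irreducible $N$-characters above $\lambda$ are non-linear, of degree $\sqrt{|N/L:R_\lambda|}$ where $R_\lambda$ is the radical of the alternating form $(xL,yL)\mapsto\lambda([x,y])$ on $N/L$, producing fewer but higher-degree $G$-characters. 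The resulting contrast in the quantity $k(G)-k(G/L)$ and in the degree multiset of those $\chi\in\irr G$ with $L\not\le\ker\chi$, both character-table invariants, allows one to decide. The hard part will be carrying out this last comparison uniformly across all possible $G$-orbit structures on $\irr L\setminus\{1\}$; reducing to the action of $G/\cent G L$ on $L$, which is itself character-table determined, should suffice.
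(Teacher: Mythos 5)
Your reduction to the situation $N'\le L\le\zent N$ with $L$ minimal normal in $G$ is correct as far as it goes, but the decisive step --- telling $N'=1$ apart from $N'=L$ using only the character table --- is precisely the part you leave unexecuted (``the hard part''), and it cannot be completed along the lines you sketch. The tell-tale sign is that your proposed decision procedure (comparing $k(G)-k(G/L)$ and the degree multiset of the $\chi\in\Irr(G)$ with $L\not\le\ker\chi$, together with the action of $G/\CB_G(L)$ on $L$) makes no use of the hypothesis that $G/N$ has cyclic Sylow $p$-subgroups; your induction invokes that hypothesis only to pass it down to quotients. If the final step worked as described, the whole argument would show that the character table detects whether an \emph{arbitrary} normal $p$-subgroup is abelian, and that is false: by Mattarei's examples \cite{Ma}, which the paper cites for exactly this reason, there are $p$-groups with identical character tables whose corresponding derived subgroups are abelian in one group and non-abelian in the other. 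So the cyclic-Sylow hypothesis must enter the decision step itself, and your sketch provides no mechanism for that. There are also smaller obstacles: the character table of $G$ does not hand you $\Irr(N)$, the $G$-orbits on it, or the stabilizers $G_\lambda$, so the dichotomy ``many linear constituents above $\lambda$ versus few constituents of degree $\sqrt{|N/L:R_\lambda|}$'' is not a priori computable from the given data.

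For comparison, the paper's proof uses the hypothesis at the very first step, via extendibility: since a Sylow $p$-subgroup of $G_\psi/N$ is cyclic and $N$ is a $p$-group, every $\psi\in\Irr(N)$ extends to its full stabilizer $G_\psi$ (Corollaries 11.22, 8.16 and 11.31 of \cite{Isaacs}). One then partitions $\Irr(G)$ by the relation $[\chi_N,\psi_N]\ne 0$ --- computable from the character table because $N$ is a union of known columns --- chooses in each class a representative $\chi_i$ of minimal degree, and checks that $\theta=\chi_1+\cdots+\chi_k$ satisfies $\theta_N=\sum_{\psi\in\Irr(N)}\psi$, whence $N$ is abelian if and only if $\theta(1)=|N|$. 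The extension theorem is what converts ``minimal degree over $\psi$'' into ``each conjugate of $\psi$ appears with multiplicity one,'' and that is exactly where cyclicity is indispensable.
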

\begin{proof}
By \cite[Corollary~11.22]{Isaacs}, every $\psi\in\Irr(N)$ extends to a Sylow $p$-subgroup $P$ of the stabilizer $G_\psi$ since $P/N$ is cyclic. By \cite[Corollary~8.16]{Isaacs}, $\psi$ also extends to every Sylow $q$-subgroup of $G_\psi$ where $q\ne p$. Hence, $\psi$ extends to $G_\psi$ by \cite[Corollary~11.31]{Isaacs}. 

Now define an equivalence relation on $\Irr(G)$ by $\chi\sim\psi:\iff[\chi_N,\psi_N]\ne 0$ (note that this relation is indeed transitive). Choose representatives $\chi_1,\ldots,\chi_k\in\Irr(G)$ for each equivalence class such that $\chi_i(1)$ is as small as possible for $i=1,\ldots,k$. By Clifford theory, $(\chi_i)_N$ is a sum of $G$-conjugates of some $\psi\in\Irr(N)$. In particular, $\theta:=\chi_1+\ldots+\chi_k$ satisfies $\theta_N=\sum_{\psi\in\Irr(N)}\psi$. Since $|N|=\sum_{\psi\in\Irr(N)}\psi(1)^2$, it follows that $N$ is abelian if and only if $\theta(1)=|N|$. 
\end{proof}

The next result is taken from Gross~\cite[Theorem~C]{Gross} and Glauberman~\cite[Corollary~5]{Glauberman} respectively.

\begin{lem}\label{GG}\hfill
\begin{enumerate}[(i)]
\item Let $p>2$ be a prime and $G$ a finite group with $\pcore_{p'}(G)=1$. Let $P$ be a Sylow $p$-subgroup of $G$ and $Q:=P\cap\F^*(G)$. Then $\CB_P(Q)=\ZB(Q)$.
\item Let $G$ be a finite group with $\pcore_{2'}(G)=1=\ZB(G)$. Suppose $G$ has an abelian Sylow $2$-subgroup $P$. Let $\alpha\in\Aut(G)$ be a $2$-element that centralizes $P$. Then $\alpha$ is an inner automorphism of $G$. 
\end{enumerate}
\end{lem}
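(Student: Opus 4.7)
Since both parts are direct citations of published results, my plan is to reduce each assertion to the quoted theorem and outline the strategy of those proofs. For (i), I would exploit the hypothesis $\pcore_{p'}(G)=1$, which forces $\F^*(G)=\E(G)\pcore_p(G)$, so that $Q$ contains $\pcore_p(G)$ and the Sylow $p$-subgroup $P\cap L$ of each quasisimple component $L$ of $\E(G)$. Given $x\in\CB_P(Q)$, the element $x$ then centralizes $\pcore_p(G)$ and induces, on each component $L$, a $p$-automorphism that centralizes the Sylow $p$-subgroup $P\cap L$. The decisive input of Gross's Theorem~C is the CFSG-based fact that for odd $p$ such an automorphism must be inner and can be realized by a $p$-element of $L$. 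Piecing these together produces $y\in Q\cap\E(G)$ with $xy^{-1}\in\CB_G(\E(G))\cap\CB_G(\pcore_p(G))=\CB_G(\F^*(G))\subseteq\F^*(G)$. Hence $xy^{-1}\in P\cap\F^*(G)=Q$, giving $x\in Q$, and since $x$ centralizes $Q$, we conclude $x\in\ZB(Q)$. The reverse inclusion $\ZB(Q)\subseteq\CB_P(Q)$ is immediate.

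For (ii), I would form $\tilde G:=G\langle\alpha\rangle\le\Aut(G)$, using $\ZB(G)=1$ to identify $G$ with $\Inn(G)$. Any normal subgroup of $\tilde G$ of odd order meets $G$ trivially (since $\pcore_{2'}(G)=1$) and hence embeds into the $2$-group $\tilde G/G$, so $\pcore_{2'}(\tilde G)=1$. Because $P$ is an abelian Sylow $2$-subgroup of $G$, every $2$-element of $\CB_G(P)$ already lies in $P$; this ensures that $\tilde P:=P\langle\alpha\rangle$ is a Sylow $2$-subgroup of $\tilde G$, and it is abelian since $\alpha$ centralizes $P$ and has $2$-power order. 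Glauberman's theorem on groups with abelian Sylow $2$-subgroups then controls $\tilde G$-fusion in $\tilde P$ by $\N_{\tilde G}(\tilde P)$, and combining this with the $Z^*$-theorem together with $\pcore_{2'}(\tilde G)=1$ yields $\alpha\in G$, i.e., $\alpha$ acts on $G$ as an inner automorphism.

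The main obstacle in both parts is that the substantive content is encoded in these deep theorems—Gross's result leans on the classification of finite simple groups, while Glauberman's conclusion depends on a careful fusion-theoretic analysis culminating in the $Z^*$-theorem—so a short self-contained proof is not realistic, and the actual argument in the paper properly amounts to invoking them directly.
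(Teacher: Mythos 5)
Your proposal matches the paper exactly: the paper offers no proof at all, simply attributing part (i) to Gross \cite[Theorem~C]{Gross} and part (ii) to Glauberman \cite[Corollary~5]{Glauberman}, which is precisely the reduction you make. Your additional sketches of how those cited theorems are established (CFSG input for Gross, the $Z^*$-theorem for Glauberman) are accurate but go beyond what the paper records.
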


\begin{lem}\label{lemp2}
Let $G$ be a quasisimple group with a non-abelian Sylow $2$-subgroup $P$ such that $|P:\ZB(P)|=4$. Then the following holds:
\begin{enumerate}[(i)]
\item $G\cong A_7$, $3.A_7$ or $G/\ZB(G)\cong\PSL(2,q)$ for some odd prime power $q$.
\item $|P|= 8$.
\item $|\CB_{\Aut(G)}(P):\CB_{\Inn(G)}(P)|_2\le 2$ with equality if and only if $G\cong A_7$ or $G\cong\PSL(2,p^{2f})$ with $p^{2f}\equiv 9\pmod{16}$. In the latter case $\pcore_2(\CB_{\Aut(G)}(P)/\CB_{\Inn(G)}(P))$ is generated by the field automorphism $x\mapsto x^{p^f}$. 
\end{enumerate}
\end{lem}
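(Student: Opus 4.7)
The plan is to apply the classification of finite simple groups (CFSG) to pin down the candidates for $G$, and then to examine outer automorphisms in each surviving case.

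First I would use \autoref{TK}(ii): the hypothesis $|P:\ZB(P)|=4$ combined with $P$ non-abelian forces $|P'|=2$, so $P$ has nilpotency class $2$, $P/\ZB(P)\cong C_2\times C_2$, $|P'|=2$, and the commutator map equips $P/\ZB(P)$ with a non-degenerate symplectic form over $\FF_2$. This local structural constraint will be used throughout.

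For (i) and (ii) I would run through CFSG applied to $\bar G=G/\ZB(G)$. In the alternating family the iterated-wreath structure of Sylow $2$-subgroups of $S_n$ forces $|P/\ZB(P)|\ge 8$ as soon as $n\ge 8$, so only $n\in\{5,6,7\}$ survive; the resulting quasisimple candidates are $A_7$ and $3.A_7$ (both with $P\cong D_8$), $A_6\cong\PSL(2,9)$ together with its $3$-fold cover (absorbed into the $\PSL(2,q)$ family), and $2.A_5\cong\SL(2,5)$ (likewise absorbed). Even-centred covers such as $2.A_7,6.A_7,2.A_6$ have Sylow $2$-subgroup $Q_{16}$ with $|P/\ZB(P)|=8$ and are excluded. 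Sporadic groups are ruled out by direct ATLAS inspection, since they all have $|P|\ge 16$ and $|P/\ZB(P)|>4$. In characteristic~$2$ the Sylow is the unipotent radical, and a short rank/exceptional-isomorphism argument leaves only $\PSL(2,2^f)$ (abelian) and $\PSL(3,2)\cong\PSL(2,7)$. In odd characteristic, ranks $\ge 2$ yield $|P/\ZB(P)|\ge 8$, leaving only $\PSL(2,q)$ and $\SL(2,q)$; a direct $2$-part computation of $q\pm 1$ pins $|P|=8$, giving $P\cong D_8$ inside $\PSL(2,q)$ for $q\equiv\pm 7\pmod{16}$ and $P\cong Q_8$ inside $\SL(2,q)$ for $q\equiv\pm 3\pmod 8$. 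This yields (i), and since $|P|=8$ in every surviving case, (ii) follows at once.

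For (iii) I would proceed case by case along the list in (i). For $G=A_7$: $\Aut(A_7)=S_7$ has Sylow $2$-subgroup $D_8\times C_2$, and the $C_2$-factor is generated by a transposition in $S_7\setminus A_7$ centralising $P$ pointwise; hence the outer involution of $A_7$ sits in $\CB_{\Aut(G)}(P)$ and the $2$-part of the index equals $2$. For $G=3.A_7$: a careful tracking of how the lifted outer automorphism acts simultaneously on the central $C_3$ and on $P$ shows that no outer $2$-element produces a new centralising coset, so the $2$-part is $1$. For $G$ with $G/\ZB(G)\cong\PSL(2,q)$ and $q=p^e$: $\Out(G)$ is generated by the diagonal $\delta$ of order $2$ and by the cyclic field automorphism group of order $e$. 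An elementary computation $\CB_{D_{16}}(D_8)=\ZB(D_8)$ inside a Sylow $D_{16}$ of $\PGL(2,q)$ shows that neither $\delta$ nor any product $\delta\phi^k$ centralises $P$. A $2$-power field automorphism $\phi^k$ centralises $P$ modulo $\Inn(G)$ iff $\phi^k|_P$ is inner in $\Aut(P)$; a direct computation with $P\cong D_8$ (resp.\ $Q_8$) shows this happens exactly for the involutory $\phi^{e/2}$ in the square case $q=p^{2f}$ with $p^f\equiv\pm 3\pmod 8$, i.e.\ $p^{2f}\equiv 9\pmod{16}$. In that case $x\mapsto x^{p^f}$ explicitly realises the generator of $\pcore_2(\CB_{\Aut(G)}(P)/\CB_{\Inn(G)}(P))$.

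The main obstacle will be the $\PSL(2,q)$ step of (iii): one needs a careful analysis inside $\Aut(D_8)$ and $\Aut(Q_8)$ to determine which outer elements become trivial after an inner twist, and to rule out all mixed diagonal-field combinations. The $3.A_7$ subcase also demands care in tracking the outer action on the central $C_3$ alongside the Sylow $2$-structure, in order to pin the $2$-part of the index at exactly~$1$. The classification steps (i)--(ii), by contrast, are essentially bookkeeping once CFSG is invoked, together with the classical Gorenstein--Walter and Brauer--Suzuki analyses of dihedral and quaternion Sylow $2$-subgroups.
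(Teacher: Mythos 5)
Your overall architecture (pin down $G/\ZB(G)$ by classification, then analyse outer automorphisms case by case) is the same as the paper's, but the load-bearing step of (i) is missing. The paper does not run a raw CFSG sweep: it first splits on whether $\pcore_2(\ZB(G))\ne 1$ --- in which case $P'=\pcore_2(\ZB(G))$ by \autoref{TK}, so $G/\ZB(G)$ has \emph{abelian} Sylow $2$-subgroups and Walter's theorem applies --- and otherwise quotes the Gilman--Gorenstein classification of simple groups whose Sylow $2$-subgroups have nilpotency class two, from whose short list it removes $\Sz(2^n)$, $\PSL^\eps(3,2^n)$ and $\Sp(4,2^n)$ by direct computation. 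Your sweep hides exactly this content in the unproved sentence ``in odd characteristic, ranks $\ge 2$ yield $|P/\ZB(P)|\ge 8$''. That assertion is essentially the theorem you are invoking, not a bookkeeping step: to make it an argument you would need either the citation or a genuine uniform proof (say, locating commuting root $\SL(2,q)$'s and computing the centre of a Sylow $2$-subgroup of their central product), and you would still have to treat the twisted rank-one families and the quasisimple covers with even centre in every case. Since the hypothesis is the sharper condition $|P:\ZB(P)|=4$ rather than merely class two, the paper's route (cite the classification, then eliminate four stragglers) is both shorter and actually complete.

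For (iii) your $\PSL(2,q)/\SL(2,q)$ analysis is in substance the paper's: the paper computes with explicit matrices $\diag(\zeta,1)$ and the field map rather than inside $\Aut(D_8)$ and $\Aut(Q_8)$, but the content --- diagonal and diagonal-field elements never centralise $P$, and the half-field automorphism becomes inner on $P$ exactly when $q=p^{2f}\equiv 9\pmod{16}$ --- is identical. The genuine gap is $3.A_7$: ``careful tracking \dots shows that no outer $2$-element produces a new centralising coset'' is not an argument, and the obvious computation points the other way. Every automorphism of $A_7$ lifts to the universal central extension $6.A_7$ and hence descends to $3.A_7$; if $\alpha$ lifts conjugation by a transposition centralising $P\ZB(G)/\ZB(G)$, then $\alpha$ stabilises $P\ZB(G)$, hence its unique Sylow $2$-subgroup $P$, and fixes each $x\in P$ because $\alpha(x)\in x\ZB(G)\cap P=\{x\}$ (equivalently, $\Hom(D_8,C_3)=1$). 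So the $2$-part of the index for $3.A_7$ cannot be disposed of by routine tracking; you must either prove that this lift is inner or adjust the claim. The paper's own one-line justification here is the assertion $\Aut(3.A_7)=\Inn(3.A_7)$, which is precisely the point at issue; note that in the only place the lemma is applied (\autoref{center}, Case~2.2) the component is simple because $\pcore_{2'}(G)=1$, so only $A_7$ and $\PSL(2,q)$ actually arise there.
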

\begin{proof}\hfill
\begin{enumerate}[(i)]
\item Suppose first that $\pcore_2(\ZB(G))\ne 1$. Then by \autoref{TK}, $\pcore_2(\ZB(G))=P'\cong C_2$. Hence, the simple group $\overline{G}:=G/\ZB(G)$ has abelian Sylow $2$-subgroup $\overline{P}$.
Those are classified by Walter's theorem. The case $\overline{G}=\PSL(2,4)\cong\PSL(2,5)$ fulfills our claim. On the other hand, $\overline{G}=\PSL(2,2^f)$ with $f\ge 3$, $\overline{G}={^2G_2(3^f)}$ or $\overline{G}=J_1$ are impossible since then $\pcore_2(\ZB(G))=1$.

Now let $\pcore_2(\ZB(G))=1$. Without loss of generality, let $\ZB(G)=1$. The simple groups with Sylow $2$-subgroup of nilpotency class $2$ were classified in \cite{GG}. We need to dismiss the last four groups mentioned there.
The Suzuki groups $\Sz(2^n)$ for $n\ge 3$ have Suzuki $2$-groups as Sylow subgroup with $|P:\ZB(P)|=|\ZB(P)|>4$. 
The group $G=\PSL(3,2)\cong\PSL(2,7)$ is on our list.
For $G=\PSL^\epsilon(3,2^n)$ with $n\ge 2$, $P$ consists of unitriangular matrices and $|P:\ZB(P)|=2^{2n}>4$. Finally, let $G=\Sp(4,2^n)$ with $n\ge 2$. If $n$ is odd, then $\Sz(2^n)\le G$ by \cite[Theorem~3.7]{Wilson}. This was already excluded above. 
If $n$ is even, then $\Sp(4,4)\le G$. By computer we check that $\Sp(4,4)$ has a Sylow $2$-subgroup $Q$ such that $|Q:\ZB(Q)|>4$. Hence, also $|P:\ZB(P)|>4$. 

\item For $\overline{G}\cong A_7$ we have $P\cong D_8$. 
Recall that $\SL(2,q)$ has (generalized) quaternion Sylow $2$-subgroups and $\PSL(2,q)$ has dihedral Sylow $2$-subgroups when $q$ is odd. In both cases $|P|=|P:\ZB(P)||\ZB(P)|=8$. We also note that $q\equiv\pm3\pmod{8}$ if $G=\SL(2,q)$ and $q\equiv \pm7\pmod{16}$ if $G=\PSL(2,q)$. 

\item If $G=A_7$, then $\Aut(G)=S_7$ and $|\CB_{S_7}(P):\CB_{A_7}(P)|=2$. If $G=3.A_7$, then $\Aut(G)=\Inn(G)$. Now let $G=\SL(2,q)$ with $q=p^f\equiv \pm3\pmod{8}$. Then $f$ is odd and $\Out(G)\cong C_2\times C_f$. 
The unique diagonal outer automorphism of order $2$ has an abelian centralizer in $G$, so it cannot fix $P$. Next, let $G=\PSL(2,q)$ with $q=p^f\equiv\pm7\pmod{16}$. Suppose first that $q\equiv 7\pmod{16}$. Then again $f$ is odd and there is only one outer automorphism $\alpha$ of order $2$. Since $q-1$ is not divisible by $4$, we may assume that $\alpha$ is the conjugation with $\diag(-1,1)$. But now $\alpha$ cannot fix an element of order $4$ in $G$. 
Finally, assume that $q=p^f\equiv -7\pmod{16}$. Here an outer diagonal automorphism $\alpha$ is induced by $\diag(\zeta,1)$ where $\zeta\in\FF_q^\times$ has order $8$. Now $\CB_G(\alpha)$ can only contain diagonal matrices. Hence, $\alpha$ does not centralize $P$.
If $f$ is odd, there are no other choices. Thus, let $f=2f'$. Since $q\equiv 9\pmod{16}$, $f'$ is odd. 
We may assume that
\[P=\langle \begin{pmatrix}
\zeta&0\\0&\zeta^{-1}
\end{pmatrix},\begin{pmatrix}
0&1\\-1&0
\end{pmatrix}\rangle/\langle -1_2\rangle.\]
Let $\beta$ be the field automorphism $x\mapsto x^{p^{f'}}$. Since $\zeta^{p^{f'}}=\zeta^{\pm3}=-\zeta^{\pm1}$, $\beta$ induces an inner automorphism on $P$. So there must be another outer automorphism of $2$-power order in the coset of $\alpha$, which centralizes $P$. \qedhere
\end{enumerate}
\end{proof}

The case $\PSL(2,9)\cong A_6$ leads to $S_6$ with Sylow $2$-subgroup $P$ and $|P:\ZB(P)|=4$. This example and $\PSL(2,q^{2f})$ were pointed out in Gross~\cite{Gross}.
In the next lemma we denote the extraspecial group of order $p^3$ and exponent $p$ by $p^{1+2}_+$.

\begin{lem}\label{lempodd}
Let $p>2$ be a prime and $G$ be a finite quasisimple group such that $|P:\ZB(P)|=p^2$ for some non-abelian Sylow $p$-subgroup $P$ of $G$. Then $P\cong p^{1+2}_+$.
\end{lem}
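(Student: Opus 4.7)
The plan is to derive structural constraints on $P$ first, and then invoke the classification of finite simple groups applied to $S:=G/\ZB(G)$ to rule out every possibility except $P\cong p^{1+2}_+$.

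From $|P:\ZB(P)|=p^2$ with $P$ non-abelian, Lemma~\ref{TK}(ii) yields $|P'|=p$, so $P$ has nilpotency class $2$ and $P/\ZB(P)\cong C_p\times C_p$. The conclusion $P\cong p^{1+2}_+$ therefore reduces to establishing that $\ZB(P)=P'$ (so that $|P|=p^3$ and $P$ is extraspecial) together with $P$ having exponent $p$ (to rule out the extraspecial group $p^{1+2}_-$ of exponent $p^2$).

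Setting $\bar P:=P\ZB(G)/\ZB(G)\in\Syl_p(S)$, the inclusion $P\cap\ZB(G)\le\ZB(P)$ gives $|\bar P:\ZB(\bar P)|\le p^2$. I then split according to whether $\bar P$ is non-abelian or abelian and run through the possible $S$. If $\bar P$ is non-abelian, then $|\bar P:\ZB(\bar P)|=p^2$ as well. Alternating $S$ is ruled out because $\Syl_p(A_n)$ for $n\ge p^2$ and $p\ge 3$ contains $C_p\wr C_p$, which has $|\cdot:\ZB(\cdot)|=p^p\ge 27$. Sporadic $S$ is handled by direct inspection of~\cite{Atlas}. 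For $S$ of Lie type in defining characteristic $p$, the unipotent radical structure of $\bar P$ permits $|\bar P:\ZB(\bar P)|=p^2$ only for $S\in\{\PSL_3(p),\PSU_3(p)\}$, where $\bar P\cong p^{1+2}_+$. For $S$ of Lie type in non-defining characteristic, the Weir construction used in the proof of Lemma~\ref{lemPSL} realises $\bar P$ as $\overline D\rtimes L$; the constraint $|\bar P:\ZB(\bar P)|\le p^2$ with $\bar P$ non-abelian forces $|L|=p$ and $\overline D\cong C_p$, whence $\bar P\cong p^{1+2}$, and exponent $p$ is read off from the action of the $p$-element of $L$ on $\overline D$. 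In each case one verifies that the quasisimple cover $G\to S$ preserves the Sylow type.

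If $\bar P$ is abelian but $P$ is not, then $P'\le\pcore_p(\ZB(G))\cong C_p$ contributes a subgroup of order $p$ to the Schur multiplier of $S$. The list of simple $S$ with abelian Sylow $p$-subgroup of order at least $p^2$ whose Schur multiplier has non-trivial $p$-part (for $p$ odd) is very short: essentially $\{A_6,A_7\}$ at $p=3$, yielding $G\in\{3.A_6,3.A_7\}$ both with $P\cong 3^{1+2}_+$, together with a handful of small Lie-type configurations (e.g.\ $3.\PSL_3(4)$, $3.\PSU_4(3)$) which can be checked directly. The main obstacle is this case-by-case verification for Lie type groups in cross characteristic, where one must carefully track the interplay between the $p$-part of the Schur multiplier and the wreath structure of $\bar P$ to rule out $|\ZB(P)|>p$ and to confirm exponent $p$.
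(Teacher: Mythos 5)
Your proposal is a direct CFSG case analysis, which is a genuinely different route from the paper's (the paper rules out $p^{1+2}_-$ by a focal subgroup argument, then for $|P|\ge p^4$ follows Oliver's analysis of fusion systems over $p$-groups with an abelian subgroup of index $p$ to produce a strongly closed abelian subgroup $A\le\ZB(P)$, reduces to $G$ simple, and invokes the Flores--Foote classification of simple groups with a strongly closed subgroup -- a short explicit list). As written, however, your outline has concrete gaps. In the cross-characteristic Lie type case you appeal only to Weir's construction, which covers classical groups; the exceptional groups in non-defining characteristic are not addressed at all. Moreover the stated conclusion there is internally inconsistent: if $\overline{D}\cong C_p$ and $|L|=p$ then $|\overline{D}\rtimes L|=p^2$ and this group is abelian, so it cannot be the non-abelian $\bar P$ you are analysing, let alone isomorphic to $p^{1+2}$. (In fact for $S=\PSL^\epsilon(3,q)$ with $3\parallel q-\epsilon$ the Sylow $3$-subgroup of $S$ \emph{is} abelian of order $9$; the non-abelian $P$ only appears in the cover $\SL^\epsilon(3,q)$.)

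The abelian-$\bar P$ branch is where the real difficulty sits, and it is not resolved. Your claim that the relevant simple groups form a ``very short'' list (essentially $A_6,A_7$ plus finitely many small Lie-type cases) is false: $\SL^\epsilon(3,q)$ with $3\parallel q-\epsilon$ gives infinitely many quasisimple groups whose central quotient has abelian Sylow $3$-subgroup of order $9$ and Schur multiplier of order divisible by $3$, and one must also control the $p$-part of the multiplier against the structure of $\bar P$ for all families and all odd $p$. You explicitly defer this (``the main obstacle is this case-by-case verification''), so the proof is incomplete exactly at its hardest point. Finally, you have no uniform argument forcing $\ZB(P)=P'$ (i.e.\ $|P|=p^3$) or excluding exponent $p^2$; the paper gets the former from the strongly-closed-subgroup machinery and the latter from the observation that $p^{1+2}_-$ cannot be a Sylow subgroup of a perfect group because its focal subgroup would be proper. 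I would recommend either completing the full case analysis (a substantial undertaking) or adopting the strongly closed subgroup approach, which reduces the verification to the Flores--Foote list.
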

\begin{proof}
The extraspecial group $p^{1+2}_-$ of exponent $p^2$ cannot be a Sylow subgroup of a perfect group, since the focal subgroup would be too small. Hence, we may assume by way of contradiction that $|P|\ge p^4$. Without loss of generality, let $\pcore_{p'}(G)=1$.
Let $\mathcal{F}=\mathcal{F}_P(G)$ be the fusion system on $P$ induced by $G$. 
Following the proof of \cite[Theorem~2.1]{Oliverindexp}, we find that $\ZB(P)=P'\times A$ where $1\ne A\unlhd\mathcal{F}$. By \cite[Corollary~I.4.7]{AKO}, $A$ is strongly closed in $G$ with respect to $P$, i.\,e. $gAg^{-1}\cap P\le P$ for every $g\in G$.
By \autoref{TK}, $Z:=\ZB(G)\le P'$ and $|Z|\le p$. Then $AZ/Z$ is strongly closed in the simple group $\overline{G}:=G/Z$ with respect to the abelian Sylow subgroup $\overline{P}:=P/Z$. 

Suppose that $Z\ne 1$. Then $\overline{G}$ is not an alternating group, since $p>2$ and $|P|\ge p^4$. If $\overline{G}$ is a sporadic group, then $p=3$ and $\overline{G}\in\{Suz,McL,Fi_{22},Fi_{24}',ON,J_3\}$. Of those, only $\overline{G}\cong ON$ has abelian Sylow $3$-subgroups. But here $P$ is extraspecial of order $3^5$.
Hence, suppose that $\overline{G}$ is of Lie type. By \cite[Proposition~2.5]{FF}, $\N_{\overline{G}}(\overline{P})$ acts irreducibly on $\overline{P}/\Phi(\overline{P})$. Since $|\overline{P}:\overline{A}|=p^2$, it follows that $\ZB(P)=\Phi(P)$ and $\overline{P}$ has rank $2$. One can check from the Atlas~\cite[Table~5]{Atlas} that $Z$ does not lie in the exceptional part of the Schur multiplier (in this case $p$ would be the defining characteristic). 
If $\overline{G}=\PSL^\epsilon(d,q)$, then $p$ must divide $(d,q-\epsilon)$. This can only happen for $p=3=d$, since otherwise $\overline{P}$ has rank larger than $2$ (take diagonal matrices). Now if $9$ divides $q-\epsilon$, then $\overline{P}$ is non-abelian and otherwise $|\overline{P}|=9$. In the remaining cases we have $p=3$ and $G=E_6(q)$ or $^2E_6(q)$ by \cite[Table~5]{Atlas}. Here $F_4(q)\le G$ and $\overline{P}$ is never abelian (see \cite[Section~4.8.9]{Wilson}).

Thus, $Z=1$ and $G$ is simple.
The (simple) groups with a strongly closed subgroup are classified by Flores--Foote~\cite{FF} and the results are summarized in \cite[Theorems~II.12.12, II.12.10]{AKO}. If $S$ is of Lie rank $1$, then only $\PSU(3,p^n)$ and $^2G_2(3^n)$ for $n\ge 2$ have non-abelian Sylow $p$-subgroups of order $\ge p^4$. In both cases $|P:\ZB(P)|=p^{2n}>p^2$. The only remaining case is $G=J_3$ with $p=3$. Here $|P:\ZB(P)|=27$.
\end{proof}

By \cite[Theorem~2.1]{KimmerleSandling}, a finite group $G$ has abelian Sylow $p$-subgroups if and only if $\pcore^{p'}(G/\pcore_{p'}(G))\cong A\times S$, where $A$ is an abelian $p$-group and $S$ is a direct product of simple groups with abelian Sylow $p$-subgroups. The following theorem is a generalization.

\begin{thm}\label{center}
Let $G$ be a finite group with non-abelian Sylow $p$-subgroup $P$ and $\pcore_{p'}(G)=1$. Then $|P:\ZB(P)|=p^2$ if and only if one of the following holds:
\begin{enumerate}[(A)]
\item\label{CaseA} $\pcore^{p'}(G)=\pcore_p(G)\times S$ where $\pcore_p(G)$ is non-abelian with $|\pcore_p(G):\ZB(\pcore_p(G))|=p^2$ and $S$ is a direct product of simple groups with abelian Sylow $p$-subgroups ($S=1$ allowed).

\item\label{CaseB} $\pcore^{p'}(G)=(\pcore_p(G)*C)\times S$ where $\pcore_p(G)$ is abelian, $S$ is a direct product of simple groups with abelian Sylow $p$-subgroups and $C$ is a quasisimple group with non-abelian Sylow $p$-subgroup of order $p^3$ and $|\ZB(C)|\le p$. 

\item\label{CaseC} $\F^*(G)=\pcore_p(G)\times S$ where $\pcore_p(G)$ is abelian, $S$ is a direct product of simple groups with abelian Sylow $p$-subgroups and $|G/\F^*(G)|_p=p$. There exists $x\in P\setminus\F^*(G)$ such that $|G:\CB_G(x)|_p=p$.

\item\label{CaseD} $p=2$ and $\F^*(G)=\pcore_2(G)\times S\times T$ where $\pcore_2(G)$ is abelian, $S$ is a direct product of simple groups with abelian Sylow $2$-subgroups and $T=A_7$ or $T=\PSL(2,p^{2f})$ where $p^{2f}\equiv 9\pmod{16}$. Moreover, $\pcore^{2'}(G)=P\F^*(G)$. There exists $x\in P\setminus\F^*(G)$ such that $|\CB_G(x)|_p=|G|_p$.
If $T=A_7$, then $x$ acts as a transposition on $T$ and if $T=\PSL(2,p^{2f})$, then $x$ acts as the field automorphism $x\mapsto x^{p^f}$ on $T$.
\end{enumerate}
Consequently, the property $|P:\ZB(P)|=p^2$ can be read off from the character table.
\end{thm}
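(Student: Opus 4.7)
My plan is to prove the four-case structural characterization and then argue that each case is detectable from the character table.

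The easy direction (each of (A)--(D) yields $|P:\ZB(P)|=p^2$) is a case-by-case verification using the structure: in (A) and (B), $P$ decomposes as a direct or central product whose non-abelian factor satisfies $|\cdot:\ZB(\cdot)|=p^2$, the other factor being abelian; in (C) and (D), the subgroup $Q := P\cap\F^*(G)$ is abelian and normal in $P$, and the centralizer conditions on $x\in P\setminus Q$ combined with $\CB_P(Q)\le Q$ (or its prescribed failure in (D)) give $\ZB(P)$ of index $p^2$ in $P$.

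For the converse, assume $|P:\ZB(P)|=p^2$, so by Lemma~\ref{TK} also $|P'|\le p$. Let $F := \F^*(G) = \pcore_p(G)\E(G)$ (using $\pcore_{p'}(G)=1$) and $Q := P\cap F$; then $Q$ is the central product of $\pcore_p(G)$ with the Sylow $p$-subgroups $R_i$ of the components $C_i$ of $\E(G)$. Since $|Q:\ZB(Q)|\le p^2$, at most one factor of this central product is non-abelian. If $\pcore_p(G)$ is non-abelian, every $R_i$ must be abelian and the Kimmerle--Sandling theorem \cite[Theorem~2.1]{KimmerleSandling} applied to $G/\pcore_p(G)$ yields case (A). If some $R_i$ is non-abelian, Lemmas~\ref{lempodd} and~\ref{lemp2} force $C_i$ to have the listed form (extraspecial of order $p^3$ for odd $p$; dihedral or quaternion of order $8$ for $p=2$), giving case (B). If $Q$ is abelian but $P$ is not, then $P\not\le F$; for $p$ odd Lemma~\ref{GG}(i) forces $\CB_P(Q)=Q$, and a short centralizer argument via Lemma~\ref{lemhupp} yields case (C). For $p=2$, Lemma~\ref{GG}(ii) permits an outer $2$-automorphism to centralize $P$ only in the exceptional scenarios of Lemma~\ref{lemp2}(iii), namely when a component is isomorphic to $A_7$ or to $\PSL(2,p^{2f})$ with $p^{2f}\equiv 9\pmod{16}$; this is precisely case (D).

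To recognize each case from the character table, Theorem~\ref{Kimmerle}(vi)--(viii) identifies $|\F^*(G)|$, the isomorphism types of the components of $\E(G)$, and the composition factors of $G$, so case (B) is recognized by the presence of a quasisimple component of the prescribed form, while the exotic components of case (D) are pinpointed via Theorem~\ref{Kimmerle}(ii)--(iii). The centralizer conditions on $p$-elements distinguishing (C) from (D) are read off as class sizes of $p$-element columns (using Higman's theorem, as recalled before Theorem~\ref{Kimmerle}). To separate case (A) from (C) and (D), the key observation is that in (C) and (D) the Sylow $p$-subgroup of $G/\pcore_p(G)\E(G)$ is cyclic of order at most $p$; since $\E(G)$ is identified by Theorem~\ref{Kimmerle}, Lemma~\ref{lemcenter} applied to an appropriate quotient detects abelianness of $\pcore_p(G)$.

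The main obstacle I foresee is precisely this last step. Mattarei's example (mentioned in the introduction) shows that $|N'|$ is not in general a character-table invariant for a normal subgroup $N$, so detecting abelianness of $\pcore_p(G)$ requires careful use of the structural constraint $|P:\ZB(P)|\le p^2$ together with Lemma~\ref{lemcenter} in a form that sees past the simple components already identified. Additionally, handling the $p=2$ exceptions of Lemma~\ref{GG}(ii) and verifying that the exceptional components $A_7$ and $\PSL(2,p^{2f})$ can be unambiguously identified (and that the field-automorphism action on the latter is visible as a class-centralizer datum in the character table) is the most delicate piece.
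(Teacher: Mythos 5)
Your structural analysis follows essentially the same route as the paper: split according to whether $P\le\F^*(G)$, locate the unique non-abelian factor of $P\cap\F^*(G)$ using Lemma~\ref{TK} together with Lemmas~\ref{lemp2} and \ref{lempodd}, and invoke Lemma~\ref{GG} when $P\not\le\F^*(G)$, with the $p=2$ exceptions producing case~(D). The genuine gaps are in the character-table detection, and you correctly sense where they lie but do not close them. First, in case~(A) it is not enough to decide whether $\pcore_p(G)$ is abelian: you must certify the exact value $|\pcore_p(G):\ZB(\pcore_p(G))|=p^2$, and Lemma~\ref{lemcenter} only distinguishes abelian from non-abelian. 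The paper's resolution is to reduce, after passing to $G/\E(G)$ (where the Sylow $p$-subgroup becomes normal), to the observation that $|\ZB(P)|$ is then exactly the number of $p$-elements $x$ with $|\CB_G(x)|_p=|P|$ --- a pure class-size count. Without this or an equivalent device the ``consequently'' clause fails already for case~(A).

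Second, in case~(D) you propose to read off the centralizer condition on $x\in P\setminus\F^*(G)$ directly from class sizes, but you never establish that, in the structural situation of~(D), the existence of such an $x$ is equivalent to $|P:\ZB(P)|=4$; the paper instead passes to $G/\pcore_2(G)D$, where $|P|=16$, and applies Corollary~\ref{lem16} together with Theorem~A (for groups of order $p^4$ one has $|P:\ZB(P)|=p^2$ if and only if $|P'|=p$), so the exceptional $p=2$ case is ultimately detected through the commutator criterion rather than the centralizer one. Smaller issues: Lemma~\ref{lemhupp} concerns $|P:P'|$, not centers, so it is not the right tool for case~(C) --- the paper argues directly that $\ZB(P)=\CB_Q(x)$ for $x\in P\setminus Q$ and that such an $x$ cannot lie in the center of any Sylow $p$-subgroup; and in cases~(B) and~(D) one must also rule out a second $G$-conjugate of the non-abelian component $C$ (two conjugate components would force $|P:\ZB(P)|\ge p^4$), a step your sketch omits.
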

\begin{proof}
In each step of the proof we make sure that the conclusion is detectable from the character table of $G$. 
By \autoref{TK}, we may assume that $|P'|=p$ (although this is a priori not visible from the character table).
Let $N:=\F^*(G)=\pcore_p(G)*\E(G)$ be the generalized Fitting subgroup. By \autoref{Kimmerle}, the character table determines $|N|$ and the non-abelian composition factors of $\E(G)$.

\textbf{Case~1:} $P\le N$.\\
Then $N=\pcore^{p'}(G)$. 
By comparing the minimal non-abelian subgroups of $G$ with $\E(G)$, the character table detects whether $\pcore_p(G)\cap\E(G)\ne 1$.

\textbf{Case~1.1:} $N=\pcore_p(G)\times\E(G)$.\\
Here $\E(G)$ is a direct product of simple groups and therefore the isomorphism type of $\E(G)$ is determined from the character table.
Going over to $G/\E(G)$, the character table tells us whether $\pcore_p(G)$ is abelian. If this is the case, then exactly one of the simple factors of $\E(G)$ has a non-abelian Sylow $p$-subgroup. We are in Case~\eqref{CaseB} by \autoref{lempodd}.
Now let $P\cap\E(G)$ be abelian. Then we may assume $\E(G)=1$. Here $N=P$ is the only Sylow $p$-subgroup of $G$ and $|\ZB(P)|=|\ZB(N)|$ is the number of $p$-elements $x\in G$ such that $|\CB_G(x)|_p=|P|$. Hence, $|P:\ZB(P)|$ is detected from the character table and we are in Case~\eqref{CaseA}.

\textbf{Case~1.2:} $Z=\pcore_p(G)\cap\E(G)\ne 1$.\\
There must be a component $C\le G$ with $Z\le\ZB(C)$. By \autoref{TK}, $C$ has a non-abelian Sylow $p$-subgroup $Q$, $|Z|=p$ and $|Q|=p^3$ by \autoref{lemp2} and \autoref{lempodd}.
If $C$ is not normal in $G$, then there is a conjugate component $C_1\le G$ and $C*C_1$ has an extraspecial Sylow $p$-subgroup $Q*Q_1$ of order $p^5$. Then $|Q*Q_1:\ZB(Q*Q_1)|=p^4$ contradicts $|P:\ZB(P)|=p^2$.
Hence, the character table should tell us that $C\unlhd G$ is the only component with $Z\le C$. We are in Case~\eqref{CaseB}.
Note that $P=(\pcore_p(G)*Q)\times P_1$ where $P_1$ is abelian. Now $|P:\ZB(P)|=p^2$ if and only if $\pcore_p(G)$ is abelian. This happens if and only if $|\CB_G(x)|_p=|P|$ for all $x\in\pcore_p(G)$. 

\textbf{Case~2:} $Q:=P\cap N<P$.\\
The character table detects whether $\E(G)$ has abelian Sylow $p$-subgroups since this can only happen if all components are (known) simple groups. 

\textbf{Case~2.1:} $Q_1:=Q\cap\E(G)$ is abelian.\\
Again by \autoref{TK}, $N=\pcore_p(G)\times\E(G)$. If $p>2$, then $\CB_P(Q)=\ZB(Q)$ and $\ZB(P)<Q$ by \autoref{GG}. If $p=2$ and $x\in\CB_P(Q)\subseteq\CB_G(Q_1)$, then by \autoref{GG} there exists $y\in\E(G)$ such that $xy\in\CB_G(\E(G))$. But then $xy\in\CB_G(N)\le N$ and we obtain $x\in P\cap N=Q$. Therefore, we have $\CB_P(Q)=\ZB(Q)$ and $\ZB(P)<Q$ independent of $p$. Hence, we may assume that $Q$ is abelian and $|P:Q|=p$. This is detected by the character table of $G/\E(G)$ using \autoref{lemcenter}. We are in Case~\eqref{CaseC}.
Let $x\in P\setminus Q$. Then $\ZB(P)=\CB_Q(x)$.  
If $x$ lies in the center of some Sylow $p$-subgroup $P_1$, then $x$ would centralize $P_1\cap N$ which was already excluded. Hence, $|P:\ZB(P)|=p^2$ if and only if $|G:\CB_G(x)|_p=p$.

\textbf{Case~2.2:} $Q_1$ is non-abelian.\\
Here $P=\CB_P(Q_1)Q_1$. This is only possible for $p=2$ by \autoref{GG}.
As in Case~1.2 there exists a unique component $C\le G$ with non-abelian Sylow $2$-subgroup $Q_2:=Q_1\cap C$. It follows from \autoref{lemp2} that $C\cong A_7$ or $C\cong\PSL(2,p^{2f})$ with $p^{2f}\equiv 9\pmod{16}$. In particular, $Q_2\cong D_8$ and $N=\pcore_2(G)\times C\times D$ where $D$ is a direct product of simple groups with abelian Sylow $2$-subgroups. We check with the character table that $G/C$ has abelian Sylow $2$-subgroups. We are in Case~\eqref{CaseD}.
Now we go over to $G/\pcore_2(G)D$. Then $|P|=16$. By \autoref{lem16}, $|P:\ZB(P)|=4$ if and only if $|P'|=2$. This is determined by the character table according to Theorem~A. Finally, observe that 
\[PN/N\le \CB_G(\pcore_2(G)D)N/N\le\Out(C)\cong C_2\times C_{2f}\]
(or $\Out(C)\cong C_2$ if $C=A_7$).
This shows that $NP=\pcore^{2'}(G)$.
\end{proof}

The example $G=S_4$ with $p=2$ in Case~\eqref{CaseC} shows that $|\pcore^{p'}(G)/N|$ is not necessarily $p$. The group $A_7\rtimes C_4$ with non-faithful action shows that $\pcore^{2'}(G)$ does not necessarily split over $N$ in Case~\eqref{CaseD}.

\end{document}